\newtheorem{theorem}{Theorem}[section]
\DeclareSymbolFont{cyrletters}{OT2}{wncyr}{m}{n}\DeclareMathSymbol{\Sha}{\mathalpha}{cyrletters}{"58}
\DeclareMathSymbol{\FSha}{\mathalpha}{cyrletters}{"11}
\renewcommand{\phi}{{\varphi}}
\renewcommand{\geq}{\geqslant}
\renewcommand{\leq}{\leqslant}
\newcommand{\links}{\left(\begin{array}{cc}}
\newcommand{\rechts}{\end{array}\right)}
\newcommand{\bai}{\left[\begin{array}{cc}}
\newcommand{\dai}{\end{array}\right]}
\newcommand{\hidari}{\left(\begin{array}{c}}
\newcommand{\migi}{\end{array}\right)}
\newcommand{\C}{\mathbb{C}}
\newcommand{\N}{\mathbb{N}}
\newcommand{\Q}{\mathbb{Q}}
\newcommand{\Z}{\mathbb{Z}}
\newcommand{\calE}{\mathcal{E}}
\DeclareMathOperator{\tr}{tr}
\newcommand{\Gal}{\operatorname{Gal}}
\newcommand{\Hom}{\operatorname{Hom}}
\newcommand{\res}{\operatorname{res}}
\newcommand{\Aut}{\operatorname{Aut}}
\newcommand{\GL}{\operatorname{GL}}
\newcommand{\Sel}{\operatorname{Sel}}
\theoremstyle{plain}
\newtheorem*{ThmA}{Theorem A}
\newtheorem{auxiliary proposition}[theorem]{Auxiliary Proposition}
\newtheorem{corollary}[theorem]{Corollary}
\newtheorem{lemma}[theorem]{Lemma}
\newtheorem{proposition}[theorem]{Proposition}
\theoremstyle{definition}
\newtheorem{definition}[theorem]{Definition}
\newtheorem{main conjecture}[theorem]{Main Conjecture}
\newtheorem{main theorem}[theorem]{Main Theorem}
\newtheorem{modesty proposition}[theorem]{Modesty Proposition}
\newtheorem{open problem}[theorem]{Open Problem}
\theoremstyle{remark} 
\newtheorem{remark}[theorem]{Remark}
\newtheorem{convergence lemma}[theorem]{Convergence Lemma}
\newtheorem{corrected lemma}[theorem]{Corrected Lemma}
\newtheorem{growth lemma}[theorem]{Growth Lemma}
\newtheorem{coefficient lemma}[theorem]{Integrality Lemma}
\newtheorem{interpolation lemma}[theorem]{Interpolation Lemma}
\newtheorem{kernel lemma}[theorem]{Kernel Lemma}
\newtheorem{limit lemma}[theorem]{Limit Lemma}
\newtheorem{tandem lemma}[theorem]{Modesty Lemma}
\newtheorem{zero-finding lemma}[theorem]{Zero-Finding Lemma}
\newcommand{\longmono}{\mbox{\;$\lhook\joinrel\longrightarrow$\;}}
\newcommand{\longepi}{\mbox{\;$\relbar\joinrel\twoheadrightarrow$\;}}
\newcommand{\Ta}{\mathrm{Ta}}
\newcommand{\defeq}{\vcentcolon=}
\newcommand{\Bmu}{\mbox{$\raisebox{-0.59ex}
  {$l$}\hspace{-0.18em}\mu\hspace{-0.88em}\raisebox{-0.98ex}{\scalebox{2}
  {$\color{white}.$}}\hspace{-0.416em}\raisebox{+0.88ex}
  {$\color{white}.$}\hspace{0.46em}$}{}}
\numberwithin{equation}{section}
\begin{document}

\title[On anticyclotomic Selmer groups of elliptic curves]{On anticyclotomic Selmer groups of elliptic curves}

\author{Matteo Longo, Jishnu Ray and Stefano Vigni}
\address{Dipartimento di Matematica, Universit\`a di Padova, Via Trieste
63, 35121 Padova, Italy.}
\email{matteo.longo@unipd.it}
\address{
	Harish Chandra Research Institute, A CI of Homi Bhabha National
	Institute, Chhatnag Road, Jhunsi, Prayagraj (Allahabad), 211 019 India.
}
\email{jishnuray@hri.res.in}
\address{Dipartimento di Matematica, Universit\`a di Genova, Via Dodecaneso 35, 16146 Genova, Italy.}
\email{stefano.vigni@unige.it}

\thanks{The first and third authors are partially supported by PRIN 2022 ``The arithmetic of motives and $L$-functions'' and by the GNSAGA group of INdAM. The second author gratefully acknowledges support of Inspire Research Grant, Department of Science and Technology, Government of India. The research by the third author is partially supported by the MUR Excellence Department Project awarded to Dipartimento di Matematica, Universit\`a di Genova, CUP D33C23001110001.}

\keywords{Elliptic curves, Iwasawa theory, signed Selmer groups, inert primes}
\subjclass[2020]{Primary 11R23; Secondary 11G05, 11R20}

\begin{abstract}
Let $p\geq5$ be a prime number and let $K$ be an imaginary quadratic field where $p$ is unramified. Under mild technical assumptions, in this paper we prove the non-existence of non-trivial finite $\Lambda$-submodules of Pontryagin duals of signed Selmer groups of a $p$-supersingular rational elliptic curve over the anticyclotomic $\Z_p$-extension of $K$, where $\Lambda$ is the corresponding Iwasawa algebra. In particular, we work under the assumption that our plus/minus Selmer groups have $\Lambda$-corank $1$, so they are not $\Lambda$-cotorsion. Our main theorem extends to the supersingular case analogous non-existence results by Bertolini in the ordinary setting; furthermore, since we cover the case where $p$ is inert in $K$, we refine previous results of Hatley--Lei--Vigni, which deal with $p$-supersingular elliptic curves under the assumption that $p$ splits in $K$.
\end{abstract}

\maketitle

\section{Introduction} 

Let $p$ be an odd prime and let $K$ be an imaginary quadratic field where $p$ is unramified. The goal of this article is to complete the work of Bertolini (\cite{BerBor}) and of Hatley--Lei--Vigni (\cite{HLV2022}) on the non-existence of non-trivial finite $\Lambda$-submodules of Pontryagin duals of Selmer groups of rational elliptic curves over the anticyclotomic $\Z_p$-extension of $K$, where $\Lambda$ is the corresponding Iwasawa algebra; in those two papers the case of $\Lambda$-corank $1$ of the relevant Selmer groups is considered. More precisely, the paper by Bertolini covers the case of elliptic curves with good ordinary reduction at $p$, while the work by Hatley--Lei--Vigni extends the techniques to $p$-supersingular elliptic curves when $p$ splits in $K$. Our main aim is to generalize the aforementioned results to $p$-supersingular elliptic curves when the prime $p$ is inert in $K$, hence completing the picture. Here we wish to emphasize that what makes it possible for us to attack this case are recent results of Burungale--Kobayashi--Ota (\cite{BKO2021}) proving Rubin's conjecture on the structure of local units in the anticyclotomic $\Z_p$-extension of the unramified quadratic extension of $\Q_p$ for a prime $p\geq5$ (\cite{rubin87}). We also take the occasion to refine some of the results in \cite{HLV2022}. 

In order to describe our main result, we need to introduce some notation and our running assumptions. Let $E$ be an elliptic curve over $\Q$ without complex multiplication. Let $N$ be the conductor of $E$ and let $K$ be an imaginary quadratic field of discriminant $-D_K$ coprime to $N$, whose ring of integers will be denoted by $\mathcal O_K$. Write $N=N^+N^-$ so that all prime factors of $N^+$ split in $K$ and all prime factors of $N^-$ are inert in $K$. We assume throughout that $N^-$ is a square-free product of an \emph{even} number of primes, which means that we are in the \emph{indefinite} setting. Let $h_K$ be the class number of $K$ and let $p\geq5$ be a prime number such that $p\nmid ND_Kh_K$; in particular, $E$ has good reduction at $p$. For each integer $m\geq 0$, let $H_{p^m}$ be the ring class field of $K$ of conductor $p^m$; in particular, if $m=0$, then $H_1$ is the Hilbert class field of $K$, which we also denote by $H_K$. Let $K_\infty$ be the anticyclotomic $\Z_p$-extension of $K$, which we realize as a subfield of a fixed algebraic closure of $K$; for all $m\geq0$, let $K_m$ be the $m$-th layer of $K_\infty/K$, \emph{i.e.}, the unique subextension of $K_\infty/K$ such that $[K_m:K]=p^m$. Since $p\nmid h_K=[H_K:K]$, there is an isomorphism $\Gal(H_{p^m}/K)\simeq\Gal(K_{m-1}/K)\times\Delta$ where $\Delta\defeq\Gal(H_p/K)$; in particular, $K_0=K$ and $\#\Delta=h_K\cdot\bigl(p-\varepsilon_K(p)\bigr)/u_K$, where $u_K\defeq\#\mathcal{O}_K^\times/2$ and $\varepsilon_K$ is the Dirichlet character attached to $K$. Furthermore, $[H_p:H_K]=(p+1)/u_K$ (see, \emph{e.g.}, \cite[Theorem 7.24]{cox}). Finally, set $\Gamma\defeq\Gal(K_\infty/K)$ and denote by $\Lambda\defeq\Z_p[\![\Gamma]\!]$ the Iwasawa algebra of $\Gamma$.

If $p$ is ordinary for $E$, then the Pontryagin dual $\mathfrak{X}_p(E/K_\infty)$ of the $p$-power Selmer group $\Sel_{p^\infty}(E/K_\infty)$ is a $\Lambda$-module of rank $1$; 
the reader is referred to \cite[Theorem A]{Bertolini95}, which holds under the assumptions described in \cite[\S 2.2]{Bertolini95}. The assumptions in \cite{Bertolini95} have been subsequently relaxed and, to the best of our knowledge, \cite[Theorem 3.2]{Nek07} together with \cite[\S2]{Nek01} offer the most general version of this result (see also \cite[Theorem A]{BerLongVen} and \cite[Theorem B]{Ho04}). From here on, we assume that $p$ is \emph{supersingular} for $E$. Then one may define signed Selmer groups $\Sel_{p^\infty}^\pm(E/K_\infty)$, depending on the choice of a sign $\pm$, and their Pontryagin duals $\mathfrak{X}_p^\pm(E/K_\infty)$ are again (at least under mild technical assumptions) $\Lambda$-modules of rank $1$. We remark that if $N^-$ is a square-free product of an \emph{odd} number of primes, then the signed Selmer groups are $\Lambda$-cotorsion: this situation is handled, \emph{e.g.}, in a recent paper by Shii (\cite{Shii}). 




Now we list some assumptions that will be used later in this paper. We first note that, since $E$ has supersingular reduction at $p$, the representation $\bar\rho_{E,p}:G_\Q\rightarrow\Aut(E[p])$ on the $p$-torsion $E[p]$ of $E$ is irreducible. 
For each prime number $\ell$, let $c_\ell(E)\defeq\big|E(\Q_\ell)/E_0(\Q_\ell)\big|$ be the Tamagawa number of $E$ at $\ell$. The first assumption we impose is
\begin{equation}\label{Tam}
\tag{Tam} \text{$p\nmid c_\ell(E)$ for all primes $\ell\,|\,N$.}
\end{equation}
Fix an integer $m\geq0$, let $v$ be a prime of $K$ and let $w$ be a prime of $K_m$ above $v$; let $\tr_{K_{m,w}/K_v}:E(K_{m,w})\rightarrow E(K_v)$ be the corresponding local trace map. The second condition that will be in force is
\begin{equation} \label{Norm}
\tag{tr} \text{$\tr_{K_{m,w}/K_v}$ is surjective for all $m$, all $v\neq p$ and all $w$ as above.}
\end{equation}
The irreducibility of $\bar\rho_{E,p}$ combined with \eqref{Tam} is used to prove a perfect control theorem, whereas \eqref{Norm} plays a role in proving a criterion for the non-existence of non-trivial finite $\Lambda$-submodules of Pontryagin duals of certain Selmer groups (Theorem \ref{THM}). Furthermore, \eqref{Norm} implies \eqref{Tam}, so our main results will be stated only under this stronger condition. 

At various steps, we shall also require 
\begin{equation}\label{Ram}
\tag{Ram$(N^-)$} \text{If $\ell\,|\,N^-$ and $\ell^2\equiv 1\pmod{p}$, then $\bar\rho_{E,p}$ is ramified at $\ell$}
\end{equation}
and 
\begin{equation} \label{Image}
\tag{im(5)} \text{If $p=5$, then the image of $\bar\rho_{E,5}$ contains a conjugate of $\GL_2(\mathbb{F}_5)$.}
\end{equation} 
Of course, \eqref{Image} is implied by the stronger  
\begin{equation}\label{Surj}
\tag{surj} \text{$\bar\rho_{E,p}$ is surjective.}
\end{equation} 
Finally, we state  
\begin{equation}\label{Ram+}
\tag{Ram$(N^+)$} \text{If $\ell\,|\,N^+$, then $E(\Q_\ell)[p]=0$ and $\bar\rho_{E,p}$ is ramified at $\ell$.}
\end{equation} 
Under \eqref{Surj} and when $p$ splits in $K$, it is shown in \cite{LV2019} that the rank of $\mathfrak{X}_p^\pm(E/K_\infty)$ over $\Lambda$ is $1$. The proof of this result in \cite{LV2019} can presumably be extended to the inert case; moreover, it is likely that the surjectivity assumption can be relaxed or even dropped if $p\geq7$ and replaced by \eqref{Image} if $p=5$. The same result is proved in \cite{BerLongVen} under \eqref{Surj}, \eqref{Ram} and \eqref{Ram+} both for $p$ split and for $p$ inert, in \cite{BBLII} in the inert case under \eqref{Image} and \eqref{Ram}, and in \cite{BBL2022} in the split case under \eqref{Image}, \eqref{Ram} and a weaker version of \eqref{Ram+} in which only the ramification of $\bar\rho_{E,p}$ is required. Henceforth we assume that 
\begin{equation} \label{rank}
\tag{rank} \text{$\mathfrak{X}_p^\pm(E/K_\infty)$ is a $\Lambda$-module of rank $1$},
\end{equation} 
which holds true in (at least) all of the situations (and under the relative assumptions) listed above. 

Using plus/minus Mordell--Weil groups $\mathbb{M}_n^\pm\subset E(K_m)\otimes_\Z\Q_p/\Z_p$, which inject into the plus/minus Selmer groups $\Sel_{p^\infty}^\pm(E/K_m)$ via Kummer maps,  
one may introduce plus/minus Shafarevich--Tate groups $\Sha_{p^\infty}^\pm(E/K_m)$ as the quotient of $\Sel_{p^\infty}^\pm(E/K_m)$ by $\mathbb{M}_n^\pm$. Let us write
$\Sha_{p^\infty}^\pm(E,K_m/K_{m+1})$ for the kernel of the restriction map 
$\Sha_{p^\infty}^\pm(E/K_m)\rightarrow \Sha_{p^\infty}^\pm(E/K_{m+1})$; we may then consider the condition  
\begin{equation}\label{Sha}
\tag{$\Sha(m)$}  \text{$\Sha_{p^\infty}^\pm(E,K_m/K_{m+1})=0$.}
\end{equation}
The modules $\mathbb{M}_m^\pm$ are equipped with Heegner points of conductor $p^m$ for all $m\geq 0$, and we may consider the 
$\mathbb{F}_p[\![\Gamma]\!]$-modules 
$\mathbb{E}_m^\pm\subset \Sel_p^\pm(E/K_m)$ generated by these points (see \S\ref{sec:Heegner_points} for details). The last assumption we need to introduce is 
\begin{equation}\label{Heegner}
\tag{$\mathrm{Heeg}(m)$}  \text{$\mathbb{E}_m^\pm\neq 0$.}
\end{equation}
Our main result, which corresponds to Theorem \ref{THM1}, is

\begin{ThmA}\label{THMINTRO}
Assume that \eqref{Norm} and \eqref{rank} hold. If $\Sha(m)$ and $\mathrm{Heeg}(m)$ are satisfied for some integer $m\geq 0$, then $\mathfrak{X}_p^\pm(E/K_\infty)$ has no non-trivial finite submodules. 
\end{ThmA}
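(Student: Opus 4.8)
The plan is to follow the strategy pioneered by Bertolini in the ordinary case and adapted to the split supersingular setting by Hatley--Lei--Vigni, but now using the control theorem available in the inert setting (which in turn relies on the Burungale--Kobayashi--Ota proof of Rubin's conjecture). First I would reduce to a purely Iwasawa-theoretic criterion: Theorem \ref{THM} (stated earlier as providing a non-existence criterion, and proved using \eqref{Norm}) says that $\mathfrak X_p^\pm(E/K_\infty)$ has no non-trivial finite $\Lambda$-submodule provided a certain ``surjectivity at the bottom layer'' condition holds. Concretely, writing $\Gamma_m=\Gal(K_\infty/K_m)$ and using the perfect control theorem (valid under the irreducibility of $\bar\rho_{E,p}$ together with \eqref{Tam}, which follows from \eqref{Norm}), the module $\mathfrak X_p^\pm(E/K_\infty)_{\Gamma_m}$ of $\Gamma_m$-coinvariants is identified with the Pontryagin dual $\mathfrak X_p^\pm(E/K_m)$ of $\Sel_{p^\infty}^\pm(E/K_m)$. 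The absence of finite $\Lambda$-submodules then follows if one can show that the natural (restriction) maps $\Sel_{p^\infty}^\pm(E/K_m)\to\Sel_{p^\infty}^\pm(E/K_{m+1})^{\Gamma_{m+1}/\Gamma_m}$ — equivalently the corank-preserving maps at each layer — behave well, which is where $\Sha(m)$ enters.

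Next I would bring in the Heegner point input. Under \eqref{rank}, the dual $\mathfrak X_p^\pm(E/K_\infty)$ has $\Lambda$-rank $1$; its $\Lambda$-torsion submodule is the obstruction we must control. The modules $\mathbb M_m^\pm$ carry Heegner points of conductor $p^m$, and $\mathbb E_m^\pm\subset\Sel_p^\pm(E/K_m)$ is the $\mathbb F_p[\![\Gamma]\!]$-submodule they generate. The hypothesis $\mathrm{Heeg}(m)$ ($\mathbb E_m^\pm\neq 0$) guarantees that the image of the compatible system of Heegner points is nonzero in the relevant cohomology, hence — by the norm-compatibility of Heegner points along the anticyclotomic tower — generates a free rank-one $\Lambda$-submodule of the (compact) dual. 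This is the mechanism by which one produces an explicit element of ``infinite order'' witnessing the rank-one part, and simultaneously shows that the quotient by this submodule, together with $\Sha(m)$, has no finite submodules. Here I would need the relation between $\mathbb M_m^\pm$, the plus/minus Selmer group, and the Shafarevich--Tate quotient $\Sha_{p^\infty}^\pm(E/K_m)=\Sel_{p^\infty}^\pm(E/K_m)/\mathbb M_m^\pm$, using that $\Sha(m)$ says the transition map on $\Sha^\pm$ from level $m$ to $m+1$ is injective.

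Assembling the argument: from $\mathrm{Heeg}(m)$ and norm-compatibility I obtain a submodule $\mathcal H^\pm\subseteq\mathfrak X_p^\pm(E/K_\infty)$ which is $\Lambda$-free of rank one with $\mathfrak X_p^\pm(E/K_\infty)/\mathcal H^\pm$ finite or at least pseudo-null; combined with \eqref{rank} this forces the $\Lambda$-torsion submodule of $\mathfrak X_p^\pm(E/K_\infty)$ to inject into this cofinite quotient. Then $\Sha(m)$ and the perfect control theorem are used to rule out that the torsion part is finite and nonzero: one shows that any finite $\Lambda$-submodule $F\subseteq\mathfrak X_p^\pm(E/K_\infty)$ would, after taking $\Gamma_m$-(co)invariants and using that restriction $\Sha_{p^\infty}^\pm(E,K_m/K_{m+1})=0$, have to vanish, because the control theorem is ``perfect'' (no error terms away from $p$, by \eqref{Norm}) and the Heegner submodule accounts for the full corank. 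I expect the main obstacle to be the bookkeeping at the prime $p$ itself: the plus/minus local conditions at the inert prime $p$ must be shown to be compatible with the control theorem and with the Heegner point construction, and this is precisely where the structure of local units from \cite{BKO2021} / Rubin's conjecture is indispensable; verifying that the plus/minus Coleman maps behave as in the split case (so that the criterion of Theorem \ref{THM} applies verbatim) is the delicate point, everything else being a formal consequence of control plus the rank-one Heegner input.
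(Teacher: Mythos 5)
Your reduction to Theorem \ref{THM} is the right first move, but you have misstated what that theorem requires and, as a result, the rest of the proposal never proves it. Theorem \ref{THM} is not a ``surjectivity at the bottom layer'' statement about restriction maps on Selmer groups: it says that $\mathfrak{X}_p^\pm(E/K_\infty)$ has no non-trivial finite $\Lambda$-submodules \emph{if and only if} the quotient $S_p^\pm(E/K)\big/US_p^\pm(E/K)$ of the Selmer Tate module at the base level by the module of universal norms (Definition \ref{def:uninorm}) is $\Z_p$-torsion-free. Universal norms never appear in your proposal, and the heart of the paper's argument (Theorem \ref{THM1}) is precisely the construction of a universal norm in $S_p^\pm(E/K)$ that is not divisible by $p$: one uses $\mathrm{Heeg}(m)$ to build, via Lemma \ref{HLVLEMMA}(3), a free rank-one $R_m$-module $U_m^\pm\subset\Sel_p^\pm(E/K_m)$; one uses $\Sha(m)$ to show $U_m^\pm\subset\mathbb{M}_m^\pm[p]$, so that $U_m^\pm$ lifts to a free $\Z_p[G_m]$-module inside $\Ta_p(\mathbb{M}_m^\pm)\subset S_p^\pm(E/K_m)$; the corestrictions $v_m=\mathrm{cores}_{K_m/K}(u_m)$ of a generator are then shown to be non-divisible by $p$ (via the triangle $\res\circ\mathrm{cores}=\tr$ and freeness over $\Z_p[G_m]$), and a compactness argument produces a limit $v_\infty\in US_p^\pm(E/K)$ not divisible by $p$. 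Together with the upper bound $\rank_{\Z_p}US_p^\pm(E/K)\leq 1$ coming from \eqref{rank}, this gives $US_p^\pm(E/K)\simeq\Z_p$ generated by a non-$p$-divisible element, hence the torsion-freeness needed for Theorem \ref{THM}. None of these steps is present in your write-up, and in particular the actual role of $\Sha(m)$ (forcing the Heegner-generated free piece to consist of Mordell--Weil classes, so that it can be lifted to the Tate module) is quite different from the role you assign it.

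The alternative mechanism you propose in its place does not work as stated. The Heegner module $\mathcal{E}_\infty^\pm$ is a $\Lambda$-\emph{submodule} of the discrete Selmer group, so by Pontryagin duality it yields a free rank-one \emph{quotient} $\mathfrak{Z}_\infty^\pm$ of $\mathfrak{X}_p^\pm(E/K_\infty)$, not a submodule $\mathcal{H}^\pm$; since $\Lambda$ has no finite submodules, any finite submodule of $\mathfrak{X}_p^\pm(E/K_\infty)$ lives in the kernel of that surjection, about which the Heegner quotient says nothing. Moreover, the claim that this kernel (equivalently, the $\Lambda$-torsion of $\mathfrak{X}_p^\pm(E/K_\infty)$) is finite or pseudo-null is unjustified and is not a consequence of $\mathrm{Heeg}(m)$ for a single $m$ --- it is essentially the conclusion one would get from Matar-type freeness results, which, as the paper points out, are problematic in the inert plus case because $\mathbb{E}_0^+=0$. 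Note also that the non-triviality of $\mathcal{E}_\infty^\pm$ (hence Proposition \ref{propfreeness}) already follows unconditionally from Cornut's theorem; the hypothesis $\mathrm{Heeg}(m)$ is a finer, mod-$p$, finite-level statement whose purpose is to feed the $R_m$-freeness argument above, not to produce the rank-one part of $\mathfrak{X}_p^\pm(E/K_\infty)$.
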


The proof of this theorem is obtained by studying how the module of universal norms 
$US_p^\pm(E/K)$ sits inside the $p$-adic Tate module $S_p^\pm(E/K)$ of $\Sel_{p^\infty}^\pm(E/K)$ (see Definition \ref{def:uninorm}). More precisely, one shows that $US_p^\pm(E/K)$ is a free $\Z_p$-module of rank $1$ and the quotient $S_p^\pm(E/K)\big/US_p^\pm(E/K)$ is torsion-free, from which a formal argument yields Theorem A (see Theorem \ref{THM1} for details). As will be apparent, our proofs follow quite closely the arguments in \cite{Ber-Annihilator}, \cite{Bertolini95}, \cite{BerBor} in the ordinary case and the generalizations to the split supersingular case in \cite{HLV2022}.

Now we would like to discuss the relation of the present article with \cite{HLV2022}. Suppose that $p$ splits in $K$. The analogous result on the non-existence of non-trivial finite $\Lambda$-submodules of $\mathfrak{X}_p^\pm(E/K_\infty)$ is stated in \cite{HLV2022} under the stronger condition that $\mathbb{E}_m^\pm$ is non-trivial \emph{for all} $m\geq 0$. Under this assumption, it is possible to show that the module of universal norms $US_p^\pm(E/K)$ is free of rank $1$ over $\Z_p$ and is generated by a point of $E(K)\otimes_\Z\Z_p$ that is not divisible by $p$. The same result holds in the ordinary case under the assumption that $\mathbb{E}_m\neq 0$ for \emph{some} integer $m\geq 0$ (\cite{BerBor}); however, the proof of this result in the ordinary case crucially exploits the fact that the local norm maps are surjective also at $p$, which is false in the case of supersingular reduction, and this is why in \cite{HLV2022} the authors are forced to assume this stronger condition. However, if we only assume $\mathbb{E}_m^\pm\neq 0$ for \emph{some} integer $m\geq 0$ (as in \cite{BerBor}), then we can still prove that $US_p^\pm(E/K)\simeq\Z_p$ and $S_p^\pm(E/K)/US_p^\pm(E/K)$ is torsion-free, although we can no longer show that the universal norms are generated by a point in $E(K)\otimes_\Z\Z_p$, as $\mathbb{E}_0^+=0$ in this case. 

Still in the split supersingular case, we finally remark that the assumption $\mathbb{E}_m^\pm\neq0$ for all $m\geq 0$ is equivalent to the assumption $\mathbb{E}_0^\pm\neq0$: this is accounted for by the trace relations satisfied by Heegner points and recalled in \S\ref{sec:Heegner}. Assuming $\mathbb{E}_0^\pm\neq 0$, a recent result of Matar (\cite{Matar}) ensures that $\mathfrak{X}_p^\pm(E/K_\infty)$ is free of rank $1$ over $\Lambda$ and the group of universal norms is generated by a Heegner point in $E(K)$, which is non-trivial modulo $p$ by assumption: this provides another proof of the main theorem of \cite{HLV2022}, but notice once again that this approach is at least problematic in the inert plus case, as $\mathbb{E}_0^+=0$. 

\subsection*{Notation and conventions}

We choose algebraic closures $\overline\Q$ of $\Q$ and $\overline\Q_p$ of $\Q_p$, where $p$ is a prime number. Moreover, we fix embeddings $\overline\Q\hookrightarrow\C$ and $\overline{\Q}\hookrightarrow\overline\Q_p$ for all $p$. Finally, if $L$ is a number field, then $G_L$ denotes the absolute Galois group of $L$ (with respect to a fixed algebraic closure of $L$).

\subsection*{Acknowledgements}

Jishnu Ray gratefully thanks Jeffrey Hatley for his encouragement and some interesting online conversations on signed Selmer groups of elliptic curves. He also thanks Antonio Lei for introducing him to the beautiful subject of supersingular Iwasawa theory during postdoc years. The authors would like to thank the anonymous referee for several valuable comments and suggestions.

\section{Signed Selmer groups of elliptic curves}

\subsection{Selmer groups and control theorem} \label{selmer-subsec}

Recall $\Lambda=\Z_p[\![\Gamma]\!]$. Write $\varepsilon:G_K\rightarrow \Lambda^\times$ for the (tautological) character given by composing the projection $G_K\rightarrow\Gamma$ with the canonical inclusion $\Gamma\hookrightarrow\Lambda^\times$. Let $T$ be the $p$-adic Tate module of $E$. The $\Lambda$-module $\mathbf{T}\defeq T\otimes_{\Z_p}\Lambda(\varepsilon^{-1})$ is free of rank $2$, compact and equipped with a continuous action of $G_\Q$. Define the discrete $\Lambda$-module $\mathbf{A}\defeq\Hom(\mathbf{T}^\iota,\Bmu_{p^\infty})$, where $\Bmu_{p^\infty}\subset\overline{\Q}$ is the group of all $p$-power roots of unity and $x\mapsto x^\iota$ denotes the canonical involution of $\Lambda$. Shapiro's lemma shows that there are isomorphisms 
\[ H^1(K,\mathbf{A})\simeq\varinjlim_{n\geq 1,m\geq0}H^1\bigl(K_m,E[p^n]\bigr) \]
and
\[ H^1(K,\mathbf{T})\simeq\varprojlim_m\varprojlim_nH^1(K_m,T/p^nT). \]
Following \cite[\S5]{BerLongVen} and \cite[Section 4]{BBLII}, we may define discrete Selmer groups $\Sel^\pm(K,\mathbf{A})$ and, by propagation, Selmer groups $\Sel^\pm(K,\mathbf{A}[\mathfrak{P}])$ for every ideal $\mathfrak{P}$ of $\Lambda$, where $\mathbf{A}[\mathfrak P]$ stands for the $\Lambda$-submodule of $\mathbf{A}$ consisting of all the elements that are annihilated by $\mathfrak P$. The reader is referred to \cite{BerLongVen} and \cite{BBLII} for the definition of the $\pm$-local conditions at $p$, which depend on the behaviour of $p$ in $K$ (in the split case, see also \cite{HLV2022}). 

Let us fix once and for all a topological generator $\gamma$ of $\Gamma$ and for all integers $m\geq0$ set $\omega_m\defeq\gamma^{p^m}-1\in\Gamma$. In this paper we are concerned with the groups 
\begin{equation} \label{CT}
\Sel_{p^n}^\pm(E/K_m)\defeq\Sel^\pm\bigl(K,\mathbf{A}[\omega_m,p^n]\bigr);
\end{equation} 
here $\mathbf{A}[\omega_m,p^n]$ is a shorthand for $\mathbf{A}\bigl[(\omega_m,p^n)\bigr]$. Thanks to the irreducibility of $\bar\rho_{E,p}$ and assumption \eqref{Tam}, by \cite[Proposition 5.8]{BerLongVen} we know that for all $n\geq1$ and $m\geq1$ the following \emph{control theorem} holds: 
\begin{equation} \label{control-eq}
\Sel_{p^n}^\pm(E/K_{m-1})\simeq  \Sel_{p^n}^\pm(E/K_m)^{\Gal(K_m/K_{m-1})},\quad\Sel_{p^{n-1}}^\pm(E/K_m)\simeq\Sel_{p^n}^\pm(E/K_m)[p]. 
\end{equation}
Notice that the first isomorphism is induced by restriction in cohomology and the second 
by the canonical inclusion. 

\begin{remark}
Assumption \eqref{Tam} is used in the proof in \cite{BerLongVen} of the control theorem in \eqref{control-eq} to show that localizations at places not lying above $p$ are injective; for a discussion of the local terms at the prime(s) above $p$, see the proof of Proposition \ref{coreslemma} below, which makes crucial use of recent results of Burungale--Kobayashi--Ota (\cite{BKO2021}) to get a perfect local control theorem. When put together, these pieces of local information imply the surjectivity of the restriction $\res_{K_m/K_{m-1}}$. On the other hand, the irreducibility of $\bar\rho_{E,p}$ ensures that $E(K_m)[p^n]$ is trivial for all $m,n$ (see, \emph{e.g.}, \cite[Lemma 4.3]{Gross}), which shows that $\res_{K_m/K_{m-1}}$ is injective as well.
\end{remark}

There is an equality $\Sel^\pm(K,\mathbf{A})=\varinjlim_{m,n}\Sel^\pm_{p^n}(E/K_m)$, the direct limit being computed with respect to the canonical inclusions $E[p^n]\subset E[p^{n+1}]$ and the restriction maps. Let 
\[ \mathfrak{X}_p^\pm(E/K_\infty)\defeq\Hom\bigl(\Sel^\pm(K,\mathbf{A}),\Q_p/\Z_p\bigr) \]
denote the Pontryagin dual of $\Sel^\pm(K,\mathbf{A})$, which we assume to be a compact $\Lambda$-module of rank $1$ (\emph{cf.} \eqref{rank} and the discussion in the introduction of the paper which lists, to the best knowledge of the authors, the various set of hypothesis when this is true). Let us define 
\[ \Sel^\pm_{p^\infty}(E/K_m)\defeq\varinjlim_n\Sel^\pm_{p^n}(E/K_m) \]
and
\[ S_p^\pm(E/K_m)=\Ta_p\bigl(\Sel_{p^\infty}^\pm(E/K_m)\bigr)\defeq\Hom\bigl(\Q_p/\Z_p,\Sel^\pm_{p^\infty}(E/K_m)\bigr), \]
the direct limit being taken, as before, with respect to the maps induced by the inclusions $E[p^n]\subset E[p^{n+1}]$. 

\subsection{Selmer groups and corestriction} 

Quite generally, for a finite Galois extension $\mathcal L/\mathcal K$ of fields let 
\[ \tr _{\mathcal L/\mathcal K}\defeq\sum_{\sigma\in\Gal(\mathcal L/\mathcal K)}\sigma \] 
be the corresponding trace operator.

Let $A\defeq E[p^\infty]$ be the $p$-divisible group of $E$. For integers $m'\geq m\geq 0$, let 
\[\mathrm{cores}_{K_{m'}/K_m}:H^1(K_{m'},A)\longrightarrow H^1(K_m,A)\] 
be the corestriction map.

\begin{proposition} \label{coreslemma} 
The map $\mathrm{cores}_{K_{m'}/K_m}$ induces a map 
\[ \mathrm{cores}_{K_{m'}/K_m}:\Sel_{p^n}^\pm(E/K_{m'})\longrightarrow \Sel_{p^n}^\pm(E/K_{m}). \]    
\end{proposition}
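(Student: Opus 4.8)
The statement asserts that corestriction $\mathrm{cores}_{K_{m'}/K_m}$ on $H^1(-,A)$ carries the $\pm$-Selmer subgroup into the $\pm$-Selmer subgroup. Since corestriction is functorial and, at each place, compatible with the local corestriction maps on cohomology of completions, it suffices to reduce to the case $m'=m+1$ (a general corestriction is a composite of such) and then to check place by place that $\mathrm{cores}$ sends the local condition at every place $w'$ of $K_{m'}$ lying above a place $w$ of $K_m$ into the local condition at $w$. Concretely, writing $\Sel_{p^n}^\pm(E/K_m)\subset H^1(K_m,A)$ as the subgroup of classes whose localization at each $w$ lies in a prescribed subgroup $\calH^\pm_w\subset H^1(K_{m,w},A)$ (the Kummer/finite condition at $w\nmid p$, the $\pm$-condition at $w\mid p$), I would verify
\[
\mathrm{cores}_{K_{m',w'}/K_{m,w}}\bigl(\calH^\pm_{w'}\bigr)\subseteq\calH^\pm_w\quad\text{for every }w.
\]

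**The split cases (almost everywhere).** At places $w\nmid p$ the local condition is the image of the Kummer map $E(K_{m,w})\otimes\Q_p/\Z_p\hookrightarrow H^1(K_{m,w},A)$, and corestriction in Galois cohomology is compatible with the norm (trace) map $\tr_{K_{m',w'}/K_{m,w}}\colon E(K_{m',w'})\to E(K_{m,w})$ under these Kummer maps; hence $\mathrm{cores}$ maps the local condition at $w'$ into the one at $w$ automatically. This is the routine part and needs no hypotheses. The only delicate places are those above $p$: I would split into the two behaviours of $p$ in $K$. If $p$ splits in $K$, the $\pm$-condition at $w\mid p$ is defined (as in \cite{HLV2022}) via Kobayashi's plus/minus subgroups $E^\pm(K_{m,w})\otimes\Q_p/\Z_p$, and one needs that the trace $\tr_{K_{m',w'}/K_{m,w}}$ respects these plus/minus submodules — this is exactly the norm compatibility of the plus/minus points, which is built into Kobayashi's construction and is recalled in the references \cite{BerLongVen}, \cite{BBLII} cited above.

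**The inert case at $p$ — the main obstacle.** When $p$ is inert in $K$, there is a single prime above $p$ and the completions $K_{m,w}$ run through the layers of the anticyclotomic $\Z_p$-extension of the unramified quadratic extension $\Q_{p^2}$; here the $\pm$-local condition is defined via Burungale–Kobayashi–Ota's work (\cite{BKO2021}) on Rubin's conjecture, and the point I expect to be the crux is the compatibility of corestriction with the transition maps built from the norm relations of local units in that tower. The plan is to invoke the explicit description of the $\pm$-subspaces $H^1_\pm(K_{m,w},\mathbf{A}[\omega_m,p^n])$ in terms of the images of the two "signed" columns of the Coleman/Perrin-Riou–type map, and to use that these images are defined precisely so as to be stable under the norm maps in the tower (that is how compatibility across the $K_m$ is guaranteed in \cite{BerLongVen}, \cite{BBLII}); the corestriction on $H^1$ is then the Galois-cohomological shadow of exactly that norm, so it carries $H^1_\pm$ at level $m'$ into $H^1_\pm$ at level $m$. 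Once the local inclusions are in place at every $w$, a class $c\in\Sel_{p^n}^\pm(E/K_{m'})$ has $\mathrm{cores}(c)$ with every localization in the right local condition, so $\mathrm{cores}(c)\in\Sel_{p^n}^\pm(E/K_m)$, which is the assertion. I would also note in passing that this is the statement that makes $\{\Sel_{p^n}^\pm(E/K_m)\}_m$ into a projective system under corestriction, compatible with the direct system under restriction used in \eqref{control-eq}, which is what is needed for the universal-norm arguments later.
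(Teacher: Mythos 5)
Your overall route is the same as the paper's: reduce to checking, place by place, that corestriction carries the local condition at level $m'$ into the one at level $m$; treat the places away from $p$ by the standard compatibility of corestriction with the trace on points under the Kummer maps; and isolate the places above $p$ (especially the inert case) as the crux. That much is right, and your identification of where the content lies is accurate. However, at the crux you do not actually give an argument: you assert that the signed subspaces ``are defined precisely so as to be stable under the norm maps in the tower'' and that this is ``built into'' the constructions of the references. That is the very statement the proposition is asking you to verify, and it is not a definitional tautology --- the finite-level conditions $H^1_{\mathrm{fin},\pm}\bigl(k,\mathbf{A}[\omega_m,p^n]\bigr)$ are defined by \emph{propagation} from the infinite-level object, not as Kummer images of norm-compatible systems of local points, so their stability under the trace has to be deduced from something.

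The paper's mechanism, which you should make explicit, is the local control theorem $H^1_{\mathrm{fin},\pm}\bigl(k,\mathbf{A}[\mathfrak{P}]\bigr)\simeq H^1_{\mathrm{fin},\pm}(k,\mathbf{A})[\mathfrak{P}]$, which in the inert case is a nontrivial input from Burungale--Kobayashi--Ota. This gives that restriction is an isomorphism from the signed condition at level $m$ onto the $\Gal(k_{m+1}/k_m)$-invariants of the signed condition at level $m+1$; hence corestriction is the trace followed by the inverse of this isomorphism, and it suffices to show the signed condition at level $m+1$ is trace-stable. Under the Shapiro identification the trace becomes multiplication by $\omega_{m+1}/\omega_m$, and the local control theorem identifies the signed condition with the $\Lambda$-submodule $H^1_{\mathrm{fin},\pm}\bigl(k,\mathbf{A}[p^n]\bigr)[\omega_{m+1}]$, which is obviously preserved by multiplication by any element of $\Lambda$. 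Without spelling out this (or an equivalent) chain, your treatment of the primes above $p$ amounts to citing the conclusion rather than proving it; with it, your proof coincides with the paper's.
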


\begin{proof}
This is a local question: we need to check that the corestriction map takes the local conditions defining Selmer groups over $K_{m'}$ to those defining Selmer groups over $K_m$. For primes outside $p$ this is standard, so let us only show this result for primes above $p$. We first recall the definition of finite plus and minus conditions. 

For $0 \leq m \leq \infty$, let $k_m$ be the localization of $K_m$ at a prime above $p$ (there are two such primes if $p$ splits in $K$, whereas there is a unique such prime if $p$ is inert in $K$, as we have assumed that $p$ does not divide the class number of $K$). Let $ O $ be the valuation ring of the local field $k_0$, which is $\Q_p$ if $p$ splits in $K$ and is the unique unramified quadratic extension of $\Q_p$ if $p$ is inert in $K$. Let $\mathfrak{m}_{k_m}$ be the maximal ideal of the valuation ring $O_m$ of $k_m$. 
Let $\widehat{E}$ be the formal group of $E/k_0$, which is a Lubin--Tate formal group 
for the uniformizer $-p\in O$. Using the formal group logarithm of $\widehat{E}$ twisted by finite order characters $\chi$ of $\Gamma$, we can define subgroups $\widehat{E}^\pm(\mathfrak{m}_{k_\infty})\subset\widehat{E}(\mathfrak{m}_{k_\infty})$ 
satisfying vanishing conditions that depend on the parity of the exponent $m$ of the $p$-power conductor $p^m$ of $\chi$. Then we define $H^1_{\mathrm{fin},\pm}(k_\infty,A)$ to be the image of $\widehat{E}^\pm(\mathfrak{m}_{k_\infty})\otimes_{\Z_p}\Q_p/\Z_p$ under the local Kummer map. Analogously, we can introduce $H^1_{\mathrm{fin},\pm}\bigl(k_m,A[p^n]\bigr)$. By Shapiro's lemma, $H^1(k_\infty,A)\simeq H^1(k,\mathbf{A})$. Define $H^1_{\mathrm{fin},\pm}(k,\mathbf{A})$ to be the image of $H^1_{\mathrm{fin},\pm}(k_\infty,A)$ under this isomorphism. For an ideal $\mathfrak{P}$ of $\Lambda$, let $H^1_{\mathrm{fin},\pm}\bigl(k,\mathbf{A}[\mathfrak{P}]\bigr)$ be the subgroup of $H^1\bigl(k,\mathbf{A}[\mathfrak{P}]\bigr)$ obtained by propagation from  $H^1_{\mathrm{fin},\pm}(k,\mathbf{A})$ under the canonical map $\mathbf{A}[\mathfrak{P}]\hookrightarrow \mathbf{A}$. By \cite[Proposition 5.4]{BerLongVen}, $H^1_{\mathrm{fin},\pm}(k,\mathbf{A})$ is a cofree $\Lambda\otimes_{\Z_p}O$-module of rank $1$ and,  by \cite[Proposition 5.3]{BerLongVen}, there is a canonical isomorphism (called \emph{local control theorem}) $H^1_{\mathrm{fin},\pm}\bigl(k,\mathbf{A}[\mathfrak{P}]\bigr)\simeq H^1_{\mathrm{fin},\pm}(k,\mathbf{A})[\mathfrak{P}]$. It is worth remarking that this last isomorphism is a consequence of the results in \cite{BKO2021}. 

Therefore, the local conditions defining $\Sel_{p^n}^\pm(E/K_m)$ at the primes above $p$ 
are those that correspond to $H^1_{\mathrm{fin},\pm}\bigl(k,\mathbf{A}[\omega_m,p^n]\bigr)$; by Shapiro's lemma, this group is a subgroup of $H^1\bigl(k_m,A[p^n]\bigr)$. There is a commutative triangle 
\[ \xymatrix@R=40pt@C=30pt{H^1(k_{m+1},A[p^n])\ar[drr]^-{\tr_{k_{m+1}/k_m}} \ar[rr]^-{\mathrm{cores}_{k_{m+1}/k_m}} & &H^1(k_{m},A[p^n])\ar[d]^-{\mathrm{res}_{k_{m+1}/k_m}}\\
&&H^1(k_{m+1},A[p^n])} \]
(see, \emph{e.g.}, \cite[Proposition 5.9]{Brown}), where, as above, $\tr_{k_{m+1}/k_m}$ denotes the trace operator. Set $\mathcal{G}\defeq\Gal(k_{m+1}/k_m)$. By the local control theorem, restriction induces an isomorphism 
\begin{equation} \label{local-iso-eq}
H^1_{\mathrm{fin},\pm}\bigl(k_m,A[p^n]\bigr)\overset\simeq\longrightarrow H^1_{\mathrm{fin},\pm}\bigl(k_{m+1},A[p^n]\bigr)^\mathcal{G},
\end{equation}
so the corestriction map is just the trace map composed with the inverse of \eqref{local-iso-eq}. Then it suffices to check that $H^1_{\mathrm{fin},\pm}\bigl(k_{m+1},A[p^n]\bigr)$ is invariant under the trace. As remarked above, $H^1_{\mathrm{fin},\pm}\bigl(k_{m+1},A[p^n]\bigr)$ can be naturally identified with 
$H^1_{\mathrm{fin},\pm}\bigl(k,\mathbf{A}[\omega_{m+1},p^n]\bigr)$; moreover, the trace map coincides with the multiplication-by-$(\omega_{m+1}/\omega_m)$ map. Finally, the local control theorem yields an identification $H^1_{\mathrm{fin},\pm}\bigl(k,\mathbf{A}[\omega_{m+1},p^n]\bigr)=H^1_{\mathrm{fin},\pm}\bigl(k,\mathbf{A}[p^n]\bigr)[\omega_{m+1}]$, 
and then the desired invariance is obvious. \end{proof}

By Proposition \ref{coreslemma}, corestriction induces maps $\mathrm{cores}_{K_{m'}/K_m}:S_p^\pm(E/K_{m'})\rightarrow S_p^\pm(E/K_{m})$ and we may define 
\begin{equation} \label{hat-S-eq}
\widehat{S}_p^\pm(E/K_\infty)\defeq\varprojlim_mS_p^\pm(E/K_m),
\end{equation}
where the inverse limit is taken with respect to the corestriction maps. Both $\mathfrak{X}_p^\pm(E/K_\infty)$ and $\widehat{S}_p^\pm(E/K_\infty)$ are $\Lambda$-modules, and there is an isomorphism 
\[\widehat{S}_p^\pm(E/K_\infty)\simeq\Hom_\Lambda\bigl(\mathfrak{X}_p^\pm(E/K_\infty),\Lambda\bigr); \]
in particular, $\widehat{S}_p^\pm(E/K_\infty)$ is a free $\Lambda$-module of rank $1$. 

\begin{definition}\label{def:uninorm} 
Let $m\in\N$. The \emph{universal norm submodule} of $S^\pm_p(E/K_m)$ is 
\[ US^\pm_p(E/K_m)\defeq\bigcap_{m'\geq m}\mathrm{cores}_{K_{m'}/K_m}\bigl(S_p^\pm(E/K_{m'})\bigr)\subset S_p^\pm(E/K_m). \]
\end{definition}

Recall that we are assuming \eqref{Norm}.

\begin{theorem} \label{THM} 
The $\Lambda$-module $\mathfrak{X}^\pm_{p}(E/K_\infty)$ has no non-trivial finite $\Lambda$-submodules if and only if the $\Z_p$-module $S_p^\pm(E/K)\big/US_p^\pm(E/K)$ is torsion-free. 
\end{theorem}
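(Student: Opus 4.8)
The plan is to extract this from the structure theory of $\Lambda$-modules together with the freeness of $\widehat S_p^\pm(E/K_\infty)$ over $\Lambda$. Write $X\defeq\mathfrak X_p^\pm(E/K_\infty)$ and recall that $\widehat S_p^\pm(E/K_\infty)\simeq\Hom_\Lambda(X,\Lambda)$ is free of rank $1$, with $X$ of rank $1$ over $\Lambda$ by \eqref{rank}. First I would identify $S_p^\pm(E/K)$ and the universal norm submodule $US_p^\pm(E/K)$ in Iwasawa-theoretic terms. Since corestriction maps realize $\widehat S_p^\pm(E/K_\infty)=\varprojlim_m S_p^\pm(E/K_m)$, the projection to the bottom layer has image $US_p^\pm(E/K)$ by definition (this uses \eqref{Norm}, which guarantees the corestriction maps $S_p^\pm(E/K_{m'})\to S_p^\pm(E/K_m)$ have the requisite surjectivity/coherence so that the image of the inverse limit equals the intersection of the images). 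Thus $US_p^\pm(E/K)$ is the image of the natural map $\widehat S_p^\pm(E/K_\infty)\to S_p^\pm(E/K)$, i.e. of $\Hom_\Lambda(X,\Lambda)\to\Hom_{\Z_p}(X_\Gamma^\vee\text{-type object},\Z_p)$; more precisely $S_p^\pm(E/K)=\Hom_{\Z_p}(\Q_p/\Z_p,\Sel^\pm_{p^\infty}(E/K))$ which by the control theorem \eqref{control-eq} is $\Hom_{\Z_p}(\Q_p/\Z_p, X^\vee[\omega_0])$-compatible data, and the map $\widehat S\to S_p^\pm(E/K)$ is "reduction mod $\omega_0$" on the free $\Lambda$-module $\Hom_\Lambda(X,\Lambda)$.

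Next I would make the linear-algebra translation precise. The key identity is: for the free rank-one module $F\defeq\Hom_\Lambda(X,\Lambda)$, one has $F/\omega_0 F\hookrightarrow S_p^\pm(E/K)$ with image $US_p^\pm(E/K)$, and the cokernel $S_p^\pm(E/K)/US_p^\pm(E/K)$ is canonically $\Hom_{\Z_p}(\Q_p/\Z_p, H^1(\Gamma, X^\vee))$-type, i.e. governed by the $\Gamma$-cohomology (equivalently, the $\omega_0$-torsion) of $X$. Concretely, applying $\Hom_\Lambda(-,\Lambda)$ to $0\to X[\omega_0]\to X\xrightarrow{\omega_0} X\to X/\omega_0 X\to 0$ and using that $\widehat S=\Hom_\Lambda(X,\Lambda)$ while $\Ext^i_\Lambda(X,\Lambda)$ measures the torsion part $X_{\tors}$ of $X$, one sees that $S_p^\pm(E/K)/US_p^\pm(E/K)$ is torsion-free over $\Z_p$ precisely when $X_{\tors}$ has no finite-index phenomenon at $\omega_0$ — and since $\gamma$ (hence $\omega_0$) may be chosen so that this detects all finite submodules (a finite $\Lambda$-submodule is killed by some power of the augmentation ideal and by $p^N$, hence by $\omega_0^k p^N$, and conversely a nonzero finite submodule forces $\omega_0$-torsion in $X_{\tors}$ contributing $\Z_p$-torsion to the cokernel), the two conditions are equivalent. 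I would phrase this cleanly via: $X$ has no nontrivial finite $\Lambda$-submodule $\iff$ the maximal finite submodule $X^{\fin}=0$ $\iff$ $\Ext^2_\Lambda(X,\Lambda)=0$ $\iff$ (by the above exact-sequence chase) the $\Z_p$-torsion of $S_p^\pm(E/K)/US_p^\pm(E/K)$ vanishes.

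The main obstacle I anticipate is handling the torsion part of $X$ rigorously: one must show that $X$ having no finite submodule is detected after taking $\Gamma$-coinvariants/invariants at the specific $\omega_0=\gamma-1$, rather than needing to vary the generator or pass to all $\omega_m$. The clean way around this is the standard fact (going back to Bertolini, and used in \cite{HLV2022}) that for a finitely generated $\Lambda$-module $X$ of rank $1$, writing $X=\Lambda\oplus(\text{torsion})$ up to pseudo-isomorphism, one has $\Hom_\Lambda(X,\Lambda)\simeq\Lambda$ canonically and the failure of $\Hom_\Lambda(X,\Lambda)\otimes_\Lambda\Lambda/\omega_0\to S_p^\pm(E/K)$ to be surjective — equivalently the $\Z_p$-torsion of the cokernel — is isomorphic to $\Hom(\Q_p/\Z_p,\text{(finite part of }X^\vee\text{))}$, which vanishes iff $X$ itself has no finite submodule. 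I would cite the analogous arguments in \cite{Ber-Annihilator}, \cite{Bertolini95}, \cite{BerBor} and \cite{HLV2022} for the precise homological bookkeeping, since the control theorem \eqref{control-eq} and the freeness of $\widehat S_p^\pm(E/K_\infty)$ put us in exactly the same formal situation as the ordinary case once the local subtlety at $p$ (resolved in Proposition \ref{coreslemma} via \cite{BKO2021}) has been absorbed.
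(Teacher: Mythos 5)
Your proposal takes essentially the same route as the paper, whose proof of Theorem \ref{THM} consists precisely of adapting the cohomological argument of \cite[Theorem 6.1 and Corollary 6.2]{BerBor} (as observed in \cite{HLV2022}, those arguments do not involve the local conditions at $p$): one identifies $US_p^\pm(E/K)$ with the image of the $\Gamma$-coinvariants of the free rank-one module $\widehat{S}_p^\pm(E/K_\infty)\simeq\Hom_\Lambda\bigl(\mathfrak{X}_p^\pm(E/K_\infty),\Lambda\bigr)$ in $S_p^\pm(E/K)$ and shows that the $\Z_p$-torsion of the quotient is dual to the maximal finite $\Lambda$-submodule of $\mathfrak{X}_p^\pm(E/K_\infty)$, which is exactly your plan. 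Two minor inaccuracies, neither fatal to the approach: the equality of the image of $\varprojlim_m S_p^\pm(E/K_m)\to S_p^\pm(E/K)$ with $\bigcap_m\mathrm{cores}_{K_m/K}\bigl(S_p^\pm(E/K_m)\bigr)$ follows from compactness of the finitely generated $\Z_p$-modules $S_p^\pm(E/K_m)$ rather than from \eqref{Norm} (which instead controls the local terms away from $p$ in the global-to-local sequences underlying \cite[Theorem 6.1]{BerBor}), and the group you denote $\Hom\bigl(\Q_p/\Z_p,\text{finite part of }X^\vee\bigr)$ is literally zero as written, the intended object being the (dual of the) maximal finite submodule itself.
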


\begin{proof} The proof can be adapted from the ordinary case, which is treated in \cite[Section 6]{BerBor}: see, especially, \cite[Theorem 6.1]{BerBor} and \cite[Corollary 6.2]{BerBor}. As is observed in \cite{HLV2022}, the arguments are of a cohomological nature and do not involve local conditions at $p$. \end{proof}

\section{$\Lambda$-modules of Heegner points}

\subsection{Mordell--Weil groups} 

For all integers $m\geq0$, recall the elements $\omega_m\in\Gamma$ from \S \ref{selmer-subsec}. Let $\Phi_{m+1}(T)=\sum_{j=0}^{p-1}T^{j\cdot p^m}\in\Z[T]$ be the $p^{m+1}$-th cyclotomic polynomial. Finally, let us denote by $n_p$ the number of primes of $K$ lying above $p$ and put $\nu_p\defeq n_p-1\in\{0,1\}$. Now define the elements $\omega_m^\pm\in\Gamma$ as follows: 
\begin{enumerate}
\item[(a)] $\omega_0^+=\omega_1^+\defeq(\gamma-1)^{\nu_p}$; 
\item[(b)] $\omega_m^+\defeq(\gamma-1)^{\nu_p}\cdot\prod_{1\leq j\leq \lfloor \frac{m}{2}\rfloor }\Phi_{2j}(\gamma)$ for $m\geq2$; 
\item[(c)] $\omega_0^-\defeq\gamma-1$; 
\item[(d)] $\omega_m^-=(\gamma-1)\cdot\prod_{1\leq j\leq \lfloor \frac{m+1}{2}\rfloor}\Phi_{2j-1}(\gamma)$ for $m\geq1$.  
\end{enumerate}
Set $E^\pm(K_m)\defeq E(K_m)[\omega_m^\pm]$; in other words, $E^\pm(K_m)$ is the subgroup of $E(K_m)$ consisting of the points that are killed by $\omega_m^\pm$. Furthermore, define 
\[
\mathbb{M}^\pm_\infty\defeq\bigcup_{m\geq0}E^\pm(K_m)\otimes_\Z\Q_p/\Z_p,\quad\mathbb{M}^\pm_m\defeq (\mathbb{M}^\pm_\infty)^{\Gal(K_\infty/K_m)}. \]
Observe that $E^\pm(K)=E(K)$ if $p$ splits in $K$, whereas $E^-(K)=E(K)$ and $E^+(K)=0$ if $p$ is inert in $K$. 

\begin{lemma}\label{lem:norm} 
For all $m\geq1$, there is an inclusion $\tr_{K_m/K_{m-1}}\bigl(E^\pm(K_m)\bigr)\subset E^\pm(K_{m-1})$.
\end{lemma}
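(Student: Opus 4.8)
The plan is to reduce the statement to an elementary computation with the operators $\omega_m^\pm$ inside the group ring $\Z_p[\![\Gamma]\!]$, using the fact that the trace map $\tr_{K_m/K_{m-1}}$ acts on $E(K_m)$ as multiplication by $\sum_{\sigma \in \Gal(K_m/K_{m-1})} \sigma$, which, after identifying $\Gal(K_m/K_{m-1})$ with $\Gamma/\Gamma^{p^{m-1}}$ acting on $\Gamma^{p^{m-1}}/\Gamma^{p^m}$, is represented by the element $\nu_m \defeq \omega_m/\omega_{m-1} = \sum_{j=0}^{p-1} \gamma^{j p^{m-1}} = \Phi_{m+1}(\gamma) \in \Z_p[\![\Gamma]\!]$. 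So the content of the lemma is: if $x \in E(K_m)$ satisfies $\omega_m^\pm \cdot x = 0$, then $\nu_m \cdot x$ is killed by $\omega_{m-1}^\pm$; equivalently, $\omega_{m-1}^\pm \cdot \nu_m$ annihilates $x$, which would follow at once if $\omega_{m-1}^\pm \cdot \nu_m$ lies in the ideal generated by $\omega_m^\pm$ inside the relevant quotient ring acting faithfully on $E(K_m)[\omega_m^\pm]$ — more crudely, it suffices to show $\omega_m^\pm \mid \omega_{m-1}^\pm \cdot \nu_m$ in $\Z_p[\![\Gamma]\!]$.

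First I would dispose of the split case ($\nu_p = 0$, so $\omega_0^+ = \omega_1^+ = 1$): here the argument is the standard one from the split supersingular setting. Write $\gamma$ as a variable and recall the factorizations $\omega_m = \gamma^{p^m} - 1 = (\gamma-1)\prod_{j=1}^m \Phi_{j+1}(\gamma)$, while $\omega_m^+ = \prod_{1 \le j \le \lfloor m/2\rfloor} \Phi_{2j}(\gamma)$ collects the even-index cyclotomic factors and $\omega_m^- = (\gamma-1)\prod_{1\le j\le \lfloor (m+1)/2\rfloor}\Phi_{2j-1}(\gamma)$ collects the odd-index ones (together with $\gamma-1 = \Phi_1$). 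Since $\nu_m = \Phi_{m+1}(\gamma)$, the divisibility $\omega_m^\pm \mid \omega_{m-1}^\pm \cdot \nu_m = \omega_{m-1}^\pm \cdot \Phi_{m+1}(\gamma)$ is immediate by comparing which cyclotomic polynomials appear on each side: passing from level $m-1$ to level $m$, the product defining $\omega_m^\pm$ acquires exactly one new factor $\Phi_{m+1}$ precisely when $m+1$ has the right parity, and $\Phi_{m+1}$ already sits on the right-hand side; when the parities do not match, $\omega_m^\pm = \omega_{m-1}^\pm$ and the divisibility by the extra $\Phi_{m+1}$ on the right is trivial. Since $\Z_p[\![\Gamma]\!]$ is a UFD (being isomorphic, after choosing $\gamma$, to $\Z_p[\![T]\!]$) and the cyclotomic polynomials $\Phi_{j}(1+T)$ are pairwise non-associate irreducibles (distinguished primes), this comparison is rigorous.

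The inert case ($\nu_p = 1$) requires the extra factor $(\gamma-1)$ built into $\omega_m^+$. For the minus part the computation is formally identical to the split case since $\omega_m^-$ does not see $\nu_p$. For the plus part one checks $\omega_m^+ = (\gamma-1)\prod_{1\le j\le \lfloor m/2\rfloor}\Phi_{2j}(\gamma) \mid (\gamma-1)\bigl(\prod_{1\le j\le\lfloor(m-1)/2\rfloor}\Phi_{2j}(\gamma)\bigr)\Phi_{m+1}(\gamma) = \omega_{m-1}^+ \cdot \nu_m$, which again comes down to the same parity bookkeeping on the $\Phi_{2j}$'s, the leading $(\gamma-1)$ matching on both sides. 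One subtle point to address: at the very bottom, $m = 1$, where $\omega_0^+ = \omega_1^+ = (\gamma-1)^{\nu_p}$ and $\nu_1 = \Phi_2(\gamma) = 1 + \gamma + \cdots + \gamma^{p-1}$; here $\omega_1^+ \mid \omega_0^+ \cdot \nu_1$ reads $(\gamma-1)^{\nu_p} \mid (\gamma-1)^{\nu_p}\nu_1$, which is trivial, so the asserted inclusion holds (indeed in the inert plus case both groups are $0$, consistent with the observation preceding the lemma).

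\textbf{Main obstacle.} The only genuine issue is the passage from ``$\omega_m^\pm \mid \omega_{m-1}^\pm \cdot \nu_m$ in $\Z_p[\![\Gamma]\!]$'' to ``$\omega_{m-1}^\pm \cdot \nu_m$ annihilates $E(K_m)[\omega_m^\pm]$''. This is not automatic because $\Z_p[\![\Gamma]\!]$ acting on the finitely generated abelian group $E(K_m)$ factors through the Artinian quotient $\Z_p[\Gamma/\Gamma^{p^m}] = \Z_p[\![\Gamma]\!]/(\omega_m)$, which is not a domain, so divisibility need not survive. The fix is to work one cyclotomic factor at a time: because $\omega_m = \prod \Phi_{j+1}(\gamma)$ is a product of pairwise-coprime factors in the Artinian quotient, one has $E(K_m)[\omega_m^\pm] = \bigoplus E(K_m)[\Phi_{j+1}(\gamma)]$ over the relevant indices $j$, and on each summand $E(K_m)[\Phi_{j+1}(\gamma)]$ the operator $\nu_m = \Phi_{m+1}(\gamma)$ either acts as an automorphism (when $j \ne m$, by coprimality of $\Phi_{j+1}$ and $\Phi_{m+1}$ modulo $\omega_m$ up to a $p$-adic unit — one checks the resultant is a unit) or as $0$ (when $j = m$, since $\Phi_{m+1}(\gamma)$ kills $E(K_m)[\Phi_{m+1}(\gamma)]$), and in either case the product $\omega_{m-1}^\pm \cdot \nu_m$ visibly kills the summand. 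Assembling the summands on which $\nu_m$ is invertible, one sees $\nu_m\bigl(E(K_m)[\omega_m^\pm]\bigr) \subseteq E(K_m)[\omega_{m-1}^\pm]$, which is exactly the claimed inclusion $\tr_{K_m/K_{m-1}}(E^\pm(K_m)) \subseteq E^\pm(K_{m-1})$. I would present this last decomposition argument carefully, as it is where the naive UFD reasoning needs to be upgraded.
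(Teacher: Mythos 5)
Your main line of argument—reducing the lemma to the divisibility $\omega_m^\pm \mid \omega_{m-1}^\pm\cdot\nu_m$ in $\Lambda$, where $\nu_m=\omega_m/\omega_{m-1}$ represents the trace, and verifying it by parity bookkeeping on cyclotomic factors—is exactly the paper's proof, which phrases the same fact as the identity $\nu_m=(\omega_m^+/\omega_{m-1}^+)\cdot(\omega_m^-/\omega_{m-1}^-)$ in which one of the two factors equals $1$ according to the parity of $m$. One correction to your bookkeeping: with the paper's indexing $\Phi_{m+1}(T)=\sum_{j=0}^{p-1}T^{j p^{m}}$, you have $\nu_m=\sum_{j=0}^{p-1}\gamma^{j p^{m-1}}=\Phi_{m}(\gamma)$, \emph{not} $\Phi_{m+1}(\gamma)$. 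Taken literally, your identification makes $\omega_{m-1}^\pm\nu_m$ carry the factor $\Phi_{m+1}$ while $\omega_m^\pm$ acquires the new factor $\Phi_m$, and the asserted divisibility would then fail; with the index corrected, the parity comparison goes through and matches the paper.

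The ``main obstacle'' you isolate is not an obstacle, and the fix you propose for it is incorrect. If $\omega_{m-1}^\pm\nu_m=q\cdot\omega_m^\pm$ with $q\in\Lambda$ (indeed $q=\omega_m^\mp/\omega_{m-1}^\mp$, a genuine element of $\Z[\gamma]$), then for \emph{any} $\Lambda$-module $M$ and any $x\in M$ with $\omega_m^\pm x=0$ one has $\omega_{m-1}^\pm\nu_m\, x=q\,(\omega_m^\pm x)=0$; divisibility in the ring that acts is all one ever needs to pass between annihilators, and the fact that the action factors through the non-domain $\Z_p[\Gamma/\Gamma^{p^m}]$ is irrelevant. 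By contrast, the decomposition $E(K_m)[\omega_m^\pm]=\bigoplus_j E(K_m)[\Phi_{j}(\gamma)]$ on which your ``upgrade'' rests is false in general: the polynomials $\Phi_j(1+T)$ are all congruent to powers of $T$ modulo $p$, and the resultant of two distinct ones is a positive power of $p$ rather than a unit in $\Z_p$, so they are not pairwise comaximal in $\Z_p[\![\Gamma]\!]/(\omega_m)$ and no such direct sum is available (already for $M=\Z_p[\gamma]/(\gamma^p-1)$ one checks $M\neq M[\gamma-1]\oplus M[\Phi_1(\gamma)]$). Fortunately that entire paragraph can simply be deleted: the lemma follows from the divisibility in $\Lambda$ alone, exactly as in the paper.
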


\begin{proof} For every $x\in E(K_m)$, there are equalities
\[   
\tr_{K_m/K_{m-1}}(x)=\frac{\omega_m}{\omega_{m-1}}x = \frac{\omega_m^+\omega_m^-}{\omega_{m-1}^+\omega_{m-1}^-}x=
     \begin{cases}
       \frac{\omega_m^+}{\omega_{m-1}^+}x &\quad\text{if $m$ is even},\\[3mm]
       \frac{\omega_m^-}{\omega_{m-1}^-}x &\quad\text{if $m$ is odd}.   
     \end{cases}
\]
To deduce them, one simply observes that $\omega_m^+=\omega_{m-1}^+$ if $m\geq 1$ is odd  and $\omega_m^-=\omega_{m-1}^-$ if $m\geq 2$ is even. Now suppose that $x \in E^+(K_m)$. If $m$ is even, then $\omega_{m-1}^+\cdot\bigl(\frac{\omega_m^+}{\omega_{m-1}^+}x\bigr)=0$ by definition and the result follows. If $m$ is odd, then $\omega^+_m=\omega_{m-1}^+$ and the result is obviously true. Finally, assume that $x \in E^-(K_m)$. If $m$ is odd, then $\omega_{m-1}^-\cdot\bigl(\frac{\omega_m^-}{\omega_{m-1}^-}x\bigr)=0$, while if $m$ is even, then it is enough to notice that $\omega_m^-=\omega_{m-1}^-$. \end{proof}

\subsection{Heegner points}\label{sec:Heegner} 

Following \cite[\S2.4]{BD96}, for all $m\geq 0$ let us choose a Heegner point $\tilde{z}_m\in E(H_{p^m})$ in such a way that the sequence ${(\tilde{z}_m)}_{m\geq 0}$ is compatible with respect to the trace operators. As in \cite[\S 2.5]{BerLongVen}, for all $m\geq0$ define
\[ z_m\defeq\sum_{\sigma\in\Delta}\tilde{z}_{m+1}^\sigma\in E(K_m).\]
We also define the two points  
\[z_{-1}\defeq\sum_{\sigma\in\Delta}\tilde{z}_0^\sigma\in E(K),\quad z_K\defeq\tr_{H_K/K}(\tilde{z}_{0})\in E(K). \] 
Observe that $z_0$, $z_{-1}$, $z_K$ all lie in $E(K)$; furthermore, $z_{-1}$ and $z_K$ satisfy the relation 
\[ z_{-1}=\frac{p-\varepsilon_K(p)}{u_K}\cdot z_K, \]
where $\varepsilon_K$ is the Dirichlet character attached to $K$ (thus, $\varepsilon_K(p)=1$ if $p$ splits in $K$ and $\varepsilon_K(p)=-1$ if $p$ is inert in $K$). These points satisfy the formulas
\begin{enumerate}
\item $\tr_{K_m/K_{m-1}}(z_m)=-z_{m-2}$ for $m\geq1$;
\item $z_0=\begin{cases}0 & \text{if $p$ is inert in $K$},\\[3mm]-2z_K & \text{if $p$ splits in $K$}.\end{cases}$ 
\end{enumerate}
The \emph{plus/minus Heegner points} $z_m^\pm$ are defined as
\[ z^{+}_{m} \defeq \begin{cases} z_{m} & \text{if $m$ is even}\\[3mm] z_{m-1} & \text{if $m$ is odd} \end{cases},\quad z^{-}_{m} \defeq \begin{cases} z_{m-1} & \text{if $m$ is even}\\[3mm] z_m & \text{if $m$ is odd} \end{cases}.
\]
By construction, $z_m^\pm\in E^\pm(K_m)$. A direct computation shows that these points satisfy the following relations: 
\begin{enumerate} 
\item[(a)] $\tr_{K_m/K_{m-1}}(z_m^+)=-z^+_{m-2}$ for all even $m \geq 2$;
\item[(b)] $\tr_{K_m/K_{m-1}}(z_m^+)=pz^+_{m-1}$ for all odd $m \geq 1$;
\item[(c)] $\tr_{K_m/K_{m-1}}(z_m^-)=pz^-_{m-1}$ for all even $m \geq 2$;
\item[(d)] $\tr_{K_m/K_{m-1}}(z_m^-)=-z^-_{m-2}$ for all odd $m \geq 1$.
\end{enumerate}
Note that we also have 
\begin{enumerate} 
\item[(e)] $\tr_{K_1/K}(z_1^+)=\begin{cases}0 & \text{if $p$ is inert in $K$},\\[3mm]-2pz_K & \text{if $p$ splits in $K$};\end{cases}$
\item[(f)] $\tr_{K_1/K}(z_1^-)=\displaystyle{\frac{\varepsilon_K(p)-p}{u_K}}\cdot z_K$.
\end{enumerate}

\subsection{$\Lambda$-modules of Heegner points}\label{sec:Heegner_points}

Using the trace formulas for Heegner points in \S\ref{sec:Heegner}, 
we see that $z_m^\pm\in E^\pm(K_m)$. 
As in \cite[Section 4.4]{LV2019}, define $\calE^\pm_{m,n}$ to be the $\Lambda$-submodule of $\Sel_{p^n}^\pm(E/K_m)$ generated by $z_m^\pm$. Note that $\calE^+_{0,n}$ is trivial for all $n\geq 1$ when $p$ is inert in $K$, as $z_m^+=0$ in this case by definition. Then define the (discrete) $\Lambda$-submodule 
\[ \calE^\pm_\infty\defeq\varinjlim_{m,n} \calE_{m,n}^\pm \subset \Sel_{p^\infty}^\pm(E/K_\infty). \]
In general, denote by $M\mapsto M^\vee$ Pontryagin duality. In particular, let $\mathfrak{Z}_\infty^\pm\defeq(\mathcal{E}_\infty^\pm)^\vee$ be the Pontryagin dual of $\calE_\infty^\pm$.

\begin{proposition}\label{propfreeness}
The $\Lambda$-module $\mathfrak{Z}_\infty^\pm$ is free of rank $1$.
\end{proposition}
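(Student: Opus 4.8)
The plan is to show that $\mathfrak{Z}_\infty^\pm$ is a finitely generated torsion-free $\Lambda$-module of $\Lambda$-rank exactly $1$, and then invoke the fact that a finitely generated torsion-free module over $\Lambda = \Z_p[\![\Gamma]\!] \cong \Z_p[\![T]\!]$ (a regular local ring of dimension $2$) that has rank $1$ and is reflexive is automatically free — equivalently, one shows directly that $\calE_\infty^\pm$ fits into the cofree module $H^1_{\mathrm{fin},\pm}$ and extract freeness from there. Concretely, I would first set up the duality picture: $\mathfrak{Z}_\infty^\pm = (\calE_\infty^\pm)^\vee$ is a quotient of $\mathfrak{X}_p^\pm(E/K_\infty)$ since $\calE_\infty^\pm \subset \Sel^\pm(K,\mathbf A)$; in particular it is a finitely generated $\Lambda$-module. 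So the content is (i) rank $1$ and (ii) no torsion.

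For the \emph{rank}, the key input is the trace-compatibility relations (a)--(d) (and (e)--(f)) of \S\ref{sec:Heegner}, which show that the inverse limit $\varprojlim_m \calE_{m,n}^\pm$ (with respect to corestriction/trace) is built from the Heegner points $z_m^\pm$ in a way parallel to the classical Heegner $\Lambda$-module: passing to the Iwasawa-theoretic limit, the $z_m^\pm$ assemble into a single class in $\widehat S_p^\pm(E/K_\infty) \cong \Hom_\Lambda(\mathfrak{X}_p^\pm(E/K_\infty),\Lambda)$, and this class is nonzero precisely because of \eqref{rank} together with the relations showing the trace maps are (up to the explicit factors $\pm 1$, $p$, $\Phi_j(\gamma)$) nonzero. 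Dually, this forces $\mathfrak{Z}_\infty^\pm$ to have rank $\geq 1$; the rank is $\leq 1$ because $\mathfrak{Z}_\infty^\pm$ is a quotient of $\mathfrak{X}_p^\pm(E/K_\infty)$, which has rank $1$ by \eqref{rank}. Hence the rank is exactly $1$.

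For \emph{torsion-freeness}, I would argue that $\calE_\infty^\pm$ is a \emph{divisible} submodule of $\Sel^\pm(K,\mathbf A)$ — more precisely, that $\calE_\infty^\pm \cong (\Q_p/\Z_p) \otimes_{\Z_p} (\varprojlim_n \calE_{\infty}^\pm[p^n])$ — combined with the control theorem \eqref{control-eq}, which identifies $\calE_{m,n}^\pm$ with the $(\omega_m,p^n)$-torsion of a cofree object. Taking Pontryagin duals turns divisibility into torsion-freeness over $\Z_p$; to upgrade this to torsion-freeness over $\Lambda$, I would use that $\calE_\infty^\pm$ sits inside the local module $H^1_{\mathrm{fin},\pm}(k,\mathbf A)$, which by \cite[Proposition 5.4]{BerLongVen} is cofree over $\Lambda \otimes_{\Z_p} O$, so its dual has no pseudo-null or torsion submodules; a rank-$1$ submodule of a finite free module over the regular ring $\Lambda\otimes_{\Z_p}O$ that is itself reflexive is free, and one checks reflexivity via the local control theorem $H^1_{\mathrm{fin},\pm}(k,\mathbf A[\mathfrak P]) \simeq H^1_{\mathrm{fin},\pm}(k,\mathbf A)[\mathfrak P]$. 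Descending the module structure from $\Lambda\otimes_{\Z_p}O$ back to $\Lambda$ (here $O$ is either $\Z_p$ or the unramified quadratic extension, and $p$ is unramified so $\Lambda\otimes_{\Z_p}O$ is free of rank $1$ or $2$ over $\Lambda$) preserves freeness of rank $1$.

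\textbf{Main obstacle.} The delicate point is the inert plus case, where $\mathbb{E}_0^+ = 0$ (equivalently $z_0^+ = 0$), so the naive argument "the universal norm module is generated by a single Heegner point in $E(K)$" breaks down. The fix is to work one layer up and use relation (b), $\tr_{K_m/K_{m-1}}(z_m^+) = p z_{m-1}^+$ for odd $m$, which shows the $z_m^+$ still generate a rank-$1$ $\Lambda$-module in the limit even though the degree-$0$ term vanishes; the factor $p$ is exactly what is responsible for the $S_p^\pm/US_p^\pm$ quotient being nontrivial (but still torsion-free) in this case, as flagged in the introduction. So the real work is checking that, despite $z_0^+=0$, the element of $\widehat S_p^\pm(E/K_\infty)$ built from $(z_m^+)_m$ is genuinely nonzero and generates a free rank-$1$ submodule whose dual is all of $\mathfrak{Z}_\infty^+$ — this is where one must carefully track the explicit polynomials $\omega_m^\pm$ and the cyclotomic factors $\Phi_j(\gamma)$, using that $\Phi_j(\gamma)$ is a non-zero-divisor in $\Lambda$.
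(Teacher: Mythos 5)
Your overall strategy (finitely generated, torsion-free, reflexive of rank one over the two-dimensional regular local ring $\Lambda$ implies free) is legitimate in principle, but it is not the paper's route and, as written, it has two genuine gaps. The paper instead exploits the fact that each $\mathcal{E}_{m,n}^\pm$ is \emph{cyclic}, being generated by the single Heegner class $z_m^\pm$; dualizing the resulting surjections from the finite quotients of $\Lambda$ and passing to the limit yields an embedding $\mathfrak{Z}_\infty^\pm\hookrightarrow\Lambda$ of a very particular shape (an inverse limit of annihilator ideals, as in \cite[Proposition 4.7]{LV2019}), from which one reads off that $\mathfrak{Z}_\infty^\pm$ is either $0$ or isomorphic to $\Lambda$. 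Your proposal never uses cyclicity, so you genuinely must supply reflexivity, and your route to it does not work: $\mathcal{E}_\infty^\pm$ is a submodule of the \emph{global} Selmer group $\Sel^\pm(K,\mathbf{A})\subset H^1(K,\mathbf{A})$, and the localization map to $H^1_{\mathrm{fin},\pm}(k,\mathbf{A})$ is not known (and is nowhere claimed in the paper) to be injective on it, so you cannot ``embed $\mathcal{E}_\infty^\pm$ into the local cofree module'' to deduce torsion-freeness or reflexivity. Note also that torsion-free of rank one inside a free module is not enough by itself: the ideal $(p,\gamma-1)\subset\Lambda$ has all of those properties and is not free.

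The more serious gap is the non-vanishing. You assert that the class assembled from the $z_m^\pm$ is nonzero ``because of \eqref{rank} together with the relations showing the trace maps are nonzero.'' Neither input can do this job: \eqref{rank} is a statement about $\mathfrak{X}_p^\pm(E/K_\infty)$ and says nothing about whether the Heegner classes are trivial, and the trace relations are identities that would be satisfied just as well if every $z_m^\pm$ were zero. The essential arithmetic input, which the paper invokes explicitly, is Cornut's theorem (\cite{Cor02}) that $z_m^\pm$ is non-torsion for $m$ sufficiently large; without it you cannot rule out $\mathcal{E}_\infty^\pm=0$, i.e.\ $\mathfrak{Z}_\infty^\pm=0$, and the proposition would fail. (Your discussion of the inert plus case correctly identifies where the degree-zero point vanishes, but the resolution there is again Cornut's non-triviality at higher layers combined with the norm relations, not bookkeeping with the polynomials $\omega_m^\pm$ and $\Phi_j(\gamma)$.)
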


\begin{proof} One can closely mimic the proof of \cite[Proposition 4.7]{LV2019} (see also the proof of \cite[Proposition 4.5]{HLV2022}), so we only offer a sketch of the arguments. First of all, the canonical surjections $\Lambda_{m,m}\twoheadrightarrow\mathcal{E}^\pm_{m,m}$ induce injections $(\mathcal{E}^\pm_{m,m})^\vee\hookrightarrow \Lambda_{m,m}^\vee$; using the isomorphism between $\Lambda_{m,m}$ and its Pontryagin dual, we obtain an injection $\mathfrak{Z}_\infty^\pm\hookrightarrow \Lambda$. Results of Cornut (\cite{Cor02}) ensure that the point $z_m^\pm$ is non-torsion for $m$ sufficiently large; combined with the norm relations in \S\ref{sec:Heegner}, this shows that the module $\mathcal{E}_\infty^\pm$ is non-zero (\emph{cf.} \cite[Lemmas 4.5 and 4.6]{LV2019}). Finally, the injection $\mathfrak{Z}_\infty^\pm\hookrightarrow \Lambda$ implies that either $\mathfrak{Z}_\infty^\pm=0$ or $\mathfrak{Z}_\infty^\pm\simeq\Lambda$, so the claim of the proposition follows by Pontryagin duality from the non-triviality of $\mathcal{E}_\infty^\pm$. \end{proof}

For all integers $m\geq 0$ and $n\geq 1$, define the finite $\Lambda$-module 
\begin{equation} \label{defE}
\mathbb{E}_{m,n}^\pm\defeq(\mathcal{E}_\infty^\pm)^{\Gal(K_\infty/K_m)}[p^n];
\end{equation}
for notational convenience, set $\mathbb{E}_m^\pm\defeq\mathbb{E}_{m,1}^\pm$. There are inclusions of $\Lambda$-modules 
\begin{equation}\label{inclusions}
\mathcal{E}_{m,n}^\pm\subset\mathbb{E}_{m,n}^\pm\subset \mathbb{M}_m^\pm[p^n]\subset \Sel_{p^n}^\pm(E/K_m).
\end{equation}
Now set $G_m\defeq\Gal(K_m/K)$, $\mathcal{G}_m\defeq\Gal(K_\infty/K_m)$ and $R_m\defeq\mathbb{F}_p[G_m]$. 

The next lemma offers extensions of results from \cite{Ber-Annihilator}, \cite{Bertolini95}, \cite{BerBor}.

\begin{lemma}\label{HLVLEMMA}
\begin{enumerate}
\item The $\Lambda$-module $\mathbb{E}_{m,n}^\pm$ is cyclic. 
\item Restriction gives an injection of $\Lambda$-modules $\mathbb{E}_{m,n}^\pm \hookrightarrow \mathbb{E}_{m+1,n}^\pm $ inducing an isomorphism $\mathbb{E}_{m,n}^\pm\simeq (\mathbb{E}_{m+1,n}^\pm)^{\Gal(K_{m+1}/K_m)}$. By an abuse of notation, we shall view these canonical maps as an inclusion and an equality, respectively.
\item If $\mathbb{E}_{m}^\pm\neq 0$, then $\Sel_{p}^\pm(E/K_m)$ admits a free $R_{m}$-module $U_m^\pm$ of rank $1$ such that $\mathbb{E}_{m}^\pm\subset U_m^\pm\subset\mathbb{E}_{m+1}^\pm$. 
\end{enumerate}   
\end{lemma}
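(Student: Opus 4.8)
The three parts build on each other, and the plan is to deduce (1) and (2) directly from Proposition \ref{propfreeness} and then bootstrap (3) from (1), (2) and the control theorem \eqref{control-eq}. For part (1): by definition $\mathbb{E}_{m,n}^\pm=(\mathcal{E}_\infty^\pm)^{\mathcal{G}_m}[p^n]$, and since $\mathfrak{Z}_\infty^\pm=(\mathcal{E}_\infty^\pm)^\vee\simeq\Lambda$ by Proposition \ref{propfreeness}, Pontryagin duality turns $\mathcal{E}_\infty^\pm$ into the discrete module $\Lambda^\vee=\Hom(\Lambda,\Q_p/\Z_p)$. Taking $\mathcal{G}_m$-invariants and $p^n$-torsion on the discrete side corresponds dually to a quotient of $\Lambda$ by the ideal $(\omega_m,p^n)$ (up to the involution $\iota$, which does not affect cyclicity), so $(\mathbb{E}_{m,n}^\pm)^\vee$ is a cyclic $\Lambda$-module, whence $\mathbb{E}_{m,n}^\pm$ is cyclic. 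For part (2): the transition maps in the direct limit defining $\mathcal{E}_\infty^\pm$ are the restriction maps, and on the quotient $\Lambda_{m,n}\defeq\Lambda/(\omega_m,p^n)$ side these are the natural projections $\Lambda_{m+1,n}\twoheadrightarrow\Lambda_{m,n}$ and inclusions $\Lambda_{m,n}[p]\hookrightarrow\Lambda_{m,n}$; dualizing, the restriction map $\mathbb{E}_{m,n}^\pm\to\mathbb{E}_{m+1,n}^\pm$ becomes the inclusion $\Lambda_{m,n}^\vee\hookrightarrow\Lambda_{m+1,n}^\vee$, which is injective with image the $\Gal(K_{m+1}/K_m)$-invariants (i.e.\ the $\omega_m$-torsion), exactly as asserted.

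For part (3), which I expect to be the crux: assume $\mathbb{E}_m^\pm\neq0$. By part (1) the $R_m=\mathbb{F}_p[G_m]$-module $\mathbb{E}_m^\pm$ is cyclic, generated by the image $\bar z_m^\pm$ of the Heegner point. The plan is to produce a free rank-one $R_m$-submodule $U_m^\pm$ of $\Sel_p^\pm(E/K_m)$ containing $\mathbb{E}_m^\pm$ and contained in $\mathbb{E}_{m+1}^\pm$; the natural candidate is the $R_m$-span of a suitable lift of $\bar z_m^\pm$ to $\mathbb{E}_{m+1}^\pm$. Concretely, part (2) gives $\mathbb{E}_m^\pm=(\mathbb{E}_{m+1}^\pm)^{\Gal(K_{m+1}/K_m)}$; choose $y\in\mathbb{E}_{m+1}^\pm$ mapping to $\bar z_m^\pm$ under the trace (norm) map $\mathbb{E}_{m+1}^\pm\to\mathbb{E}_m^\pm$, which is possible because the trace relations of \S\ref{sec:Heegner} — specifically relations (a)–(d) there — show the Heegner point $\bar z_m^\pm$ itself is, up to a unit of $\mathbb{F}_p$ or a factor of $p$ that vanishes mod $p$ in the appropriate parity, the trace of $\bar z_{m+1}^\pm$; then set $U_m^\pm\defeq R_m\cdot y\subset\mathbb{E}_{m+1}^\pm$. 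Freeness of $U_m^\pm$ over $R_m$ amounts to showing the annihilator of $y$ in $R_m$ is zero, equivalently that $\mathbb{E}_{m+1}^\pm$ is ``large enough'': here one invokes that $\mathbb{E}_{m+1,1}^\pm$, being cyclic over $R_{m+1}$ and non-zero (it contains the non-zero $\mathbb{E}_m^\pm$), must be free of rank one over $R_{m+1}$, because $R_{m+1}$ is a local Artinian ring and a cyclic module over it is free iff it is faithful — and faithfulness follows from the fact that a non-trivial $\Lambda$-submodule of the faithful module $\mathbb{E}_{m+1,1}^\pm$ arising as the mod-$p$ reduction of the cyclic self-dual $\Lambda_{m+1,1}$ is itself faithful. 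Restricting the free $R_{m+1}$-module $\mathbb{E}_{m+1}^\pm$ to $G_m$-action, its $\Gal(K_{m+1}/K_m)$-invariants (which equal $\mathbb{E}_m^\pm$) are a free $R_m$-module of rank one, and $y$ is a generator, so $U_m^\pm=R_m\cdot y$ is free of rank one and sandwiched as required.

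The main obstacle is the parity bookkeeping in part (3): the trace relations (a)–(d) of \S\ref{sec:Heegner} alternate between multiplication by $-1$ and by $p$ depending on the parity of $m$ and the sign $\pm$, so in half the cases the trace of $\bar z_{m+1}^\pm$ vanishes modulo $p$ rather than recovering $\bar z_m^\pm$, and one must instead exploit the full cyclic structure from part (1) together with the second isomorphism in \eqref{control-eq} (which identifies $\Sel_p^\pm(E/K_m)$ with the $p$-torsion of $\Sel_{p^2}^\pm(E/K_m)$) to manufacture the lift $y$ by a slightly different route — working with $\mathbb{E}_{m+1,2}^\pm$ and reducing. I would handle this exactly as in \cite[Ber-Annihilator]{Ber-Annihilator}, \cite[Bertolini95]{Bertolini95} and \cite[HLV2022]{HLV2022}, where the analogous sandwich lemma is proved in the ordinary and split-supersingular cases respectively; the only new input here is that the local control theorem (used via Proposition \ref{coreslemma}) and Proposition \ref{propfreeness} hold in the inert case thanks to \cite{BKO2021}, so the cited arguments transfer verbatim once parities are matched against the relations (a)–(f) of \S\ref{sec:Heegner}.
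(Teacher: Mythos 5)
Your arguments for parts (1) and (2) are essentially the paper's: cyclicity of $\mathbb{E}_{m,n}^\pm$ comes from the surjection $\Lambda\simeq\mathfrak{Z}_\infty^\pm\twoheadrightarrow(\mathbb{E}_{m,n}^\pm)^\vee$ together with the self-duality of finite modules, and part (2) follows from the control theorem plus the commutation of $\Gal(K_{m+1}/K_m)$-invariants with $p^n$-torsion. Those are fine.

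Part (3), however, has a genuine gap. You assert that $\mathbb{E}_{m+1}^\pm$, being cyclic and non-zero over the local Artinian ring $R_{m+1}\simeq\mathbb{F}_p[X]/(X^{p^{m+1}})$, must be \emph{free} of rank one, justified by a faithfulness claim. This is false in general: a non-zero cyclic module over this ring is $R_{m+1}/(X^k)$ for some $1\leq k\leq p^{m+1}$, equivalently $(\gamma_{m+1}-1)^{t}R_{m+1}$ with $t=p^{m+1}-k$, and it is faithful (hence free) only when $t=0$. Nothing in the hypotheses forces $t=0$; the hypothesis $\mathbb{E}_m^\pm\neq0$ only gives $t<p^m$. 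Your faithfulness argument (``a non-trivial submodule of a faithful module is faithful'') is circular, since $\mathbb{E}_{m+1}^\pm$ is dual to a \emph{quotient} of $\Lambda_{m+1,1}$, not a submodule of a known faithful module. Once freeness fails, your construction collapses: a lift $y$ of a generator of $\mathbb{E}_m^\pm$ under the trace necessarily generates all of $\mathbb{E}_{m+1}^\pm$ over $R_{m+1}$ (a module of dimension $p^{m+1}-t$, not $p^m$, and not pointwise fixed by $\Gal(K_{m+1}/K_m)$), so it does not yield a free rank-one $R_m$-submodule of $\Sel_p^\pm(E/K_m)$. The paper's route avoids this entirely and needs no freeness of $\mathbb{E}_{m+1}^\pm$ and no Heegner trace relations: writing $\mathbb{E}_r^\pm\simeq(\gamma_r-1)^{t_r^\pm}R_r$ for $r\in\{m,m+1\}$ (cyclicity plus the ideal structure of $\mathbb{F}_p[X]/(X^{p^r})$), one computes $\dim_{\mathbb{F}_p}\bigl(((\gamma_{m+1}-1)^sR_{m+1})^{\Gal(K_{m+1}/K_m)}\bigr)=p^m-s$ and deduces from part (2) that $t_m^\pm=t_{m+1}^\pm=:t^\pm$; then $U_m^\pm\defeq(\gamma_{m+1}-1)^{p^{m+1}-p^m-t^\pm}\mathbb{E}_{m+1}^\pm$ is pointwise $\Gal(K_{m+1}/K_m)$-invariant of dimension $p^m$, hence free of rank one over $R_m$, contains $(\mathbb{E}_{m+1}^\pm)^{\Gal(K_{m+1}/K_m)}=\mathbb{E}_m^\pm$, and sits inside $\mathbb{E}_{m+1}^\pm$. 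The parity bookkeeping for the relations (a)--(f) that you flag as the main obstacle does not actually enter this part of the argument.
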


\begin{proof} The proofs of parts (1), (2) and (3) are analogous to those of 
\cite[Corollary 4.6]{HLV2022}, \cite[Lemma 4.8]{HLV2022} and \cite[Lemma 5.10]{HLV2022}, respectively. For the reader's convenience, we review the arguments. 

(1) The $\Lambda$-module $\mathbb{E}_{m,n}^\pm$ is finite, hence isomorphic to its Pontryagin dual, which is a quotient of the cyclic module $\Lambda$-module $\mathfrak{Z}_\infty$. It follows that $\mathbb{E}_{m,n}^\pm$ is cyclic. 

(2) It follows from the control theorem in \eqref{control-eq} that restriction induces an injection 
\begin{equation} \label{inj-cont-eq}
\mathbb{E}^\pm_{m,n} \longmono \bigl({\mathbb{E}_{m+1,n}^\pm}\bigr)^{\Gal(K_{m+1}/K_m)}.
\end{equation}
On the other hand, the equalities 
\[ (\mathcal{E}_\infty^\pm)^{\mathcal{G}_m}=\bigl((\mathcal{E}_\infty^\pm)^{\mathcal{G}_{m+1}}\bigr)^{\Gal(K_{m+1}/K_m)} \]
and 
\[ \bigl((\mathcal{E}_\infty^\pm)^{\mathcal{G}_{m+1}}\bigr)^{\Gal(K_{m+1}/K_m)}[p^n]=
\bigl((\mathcal{E}_\infty^\pm)^{\mathcal{G}_{m+1}}[p^n]\bigr)^{\Gal(K_{m+1}/K_m)} \]
yield the surjectivity of \eqref{inj-cont-eq}.

(3) For $r\in\{m,m+1\}$, fix a generator $\gamma_r$ of $G_r$ and set $t_r^\pm\defeq p^r-\dim_{\mathbb{F}_p}(\mathbb{E}_r^\pm)$. By part (1) and \cite[Lemma 3]{Ber-Annihilator}, for $r\in\{m,m+1\}$ there is an isomorphism $\mathbb{E}_r^\pm\simeq R_r/(\gamma_r-1)^{p^r-t_r^\pm}$; there is also an isomorphism of $R_r$-modules $R_r/(\gamma_r-1)^{p^r-t_r^\pm}\simeq (\gamma_r-1)^{t_r^\pm}R_r$ and we conclude that there is an isomorphism of $R_r$-modules 
\begin{equation}\label{lemmaHLV1}
\mathbb{E}_r^\pm\simeq(\gamma_r-1)^{t_r^\pm}R_r.
\end{equation}
The group $\Gal(K_{m+1}/K_m)\simeq\big\langle\gamma_m^{p^{m+1}}\big\rangle$ is cyclic of order $p$ and the coefficient ring of $R_m$ is ${\mathbb F}_p$, so for each integer $s\in\{0,\dots,p^{m+1}\}$ we have
\begin{equation}\label{lemmaHLV3}
\begin{split}
\bigl((\gamma_{m+1}-1)^s R_{m+1}\bigr)^{\Gal(K_{m+1}/K_m)}&\simeq \bigl(1+\gamma_{m+1}^{p^m}+\dots+\gamma_{m+1}^{p^m(p-1)}\bigr)\cdot(\gamma_{m+1}-1)^s R_{m+1}\\
&=(\gamma_{m+1}-1)^{p^{m+1}-p^m+s} R_{m+1}.
\end{split}
\end{equation}
In particular, we conclude that 
\begin{equation}\label{lemmaHLV}
\dim_{\mathbb{F}_p}\Bigl(\bigl((\gamma_{m+1}-1)^s R_{m+1}\bigr)^{\Gal(K_{m+1}/K_m)}\Bigr)=p^m-s.
\end{equation} 
By part (2), $\dim_{\mathbb{F}_p}(\mathbb{E}^\pm_m)=\dim_{\mathbb{F}_p}\bigl((\mathbb{E}_{m+1}^\pm)^{\Gal(K_{m+1}/K_m)}\bigr)$. Combining \eqref{lemmaHLV1} for $r=m+1$ and \eqref{lemmaHLV} for $s=t_{m+1}^\pm$ gives $\dim_{\mathbb{F}_p}\bigl((\mathbb{E}_{m+1}^\pm)^{\Gal(K_{m+1}/K_m)}\bigr)=p^m-t_{m+1}^\pm$. On the other hand, $\dim_{\mathbb{F}_p}(\mathbb{E}_m^\pm)=p^m-t_m^\pm$ by \eqref{lemmaHLV1} with $r=m$, so $t_m^\pm=t_{m+1}^\pm$. Put $t^\pm\defeq t_m^\pm$ and define the $R_{m+1}$-module 
\begin{equation}\label{lemmaHLV2}
U_m^\pm\defeq(\gamma_{m+1}-1)^{p^{m+1}-p^m-t^\pm}\mathbb{E}_{m+1}^\pm\simeq (\gamma_{m+1}-1)^{p^{m+1}-p^m}R_{m+1},
\end{equation} 
where the isomorphism follows from \eqref{lemmaHLV1} with $r=m+1$. The group $U_m^\pm$, which is contained in $\mathbb{E}_{m+1}^\pm$ by definition, is isomorphic to 
$R_{m+1}^{\Gal(K_{m+1}/K_m)}$ as an $R_{m+1}$-module (take $s=0$ in \eqref{lemmaHLV3}), so it is invariant under the action of $\Gal(K_{m+1}/K_m)$. In particular, $U_m^\pm$ is equipped with a canonical $R_m$-module structure. Taking $s=t^\pm=t_{m+1}^\pm$ in \eqref{lemmaHLV3}, and using \eqref{lemmaHLV1} with $r=m+1$, we see that there is an isomorphism of $R_{m+1}$-modules 
\[ (\mathbb{E}_{m+1}^\pm)^{\Gal(K_{m+1}/K_m)}\simeq(\gamma_{m+1}-1)^{p^{m+1}-p^m+t^\pm}R_{m+1}. \]
The $R_{m+1}$-module $(\gamma_{m+1}-1)^{p^{m+1}-p^m+t^\pm}R_{m+1}$ is a submodule of $(\gamma_{m+1}-1)^{p^{m+1}-p^m}R_{m+1}$ and there is an isomorphism
$(\gamma_{m+1}-1)^{p^{m+1}-p^m}R_{m+1}\simeq U_m^\pm$ of $R_{m+1}$-modules. 
Thus, we conclude that $U_m^\pm$ contains $(\mathbb{E}_{m+1}^\pm)^{\Gal(K_{m+1}/K_m)}$.
Moreover, $(\mathbb{E}_{m+1}^\pm)^{\Gal(K_{m+1}/K_m)}\simeq \mathbb{E}_m^\pm$ by part (2), 
so $U_m^\pm$ containes $\mathbb{E}_m^\pm$. On the other hand, $U_m^\pm$ 
is a subgroup of $\Sel_p^\pm(E/K_{m+1})$ that is invariant under the action 
of ${\Gal(K_{m+1}/K_m)}$, therefore, under the isomorphism  
\[ \Sel_p^\pm(E/K_{m+1})^{\Gal(K_{m+1}/K_m)}\simeq \Sel_p^\pm(E/K_m) \]
from \eqref{control-eq}, it is identified with a subgroup of $\Sel_p^\pm(E/K_m)$, denoted with the same symbol. Finally, $\dim_{\mathbb{F}_p}(U_m^\pm)=p^m$ by \eqref{lemmaHLV2}, so we conclude that $U_m^\pm$ is free of rank $1$ over $R_m$. \end{proof}

\subsection{Shafarevich--Tate groups} 

For all $m \geq 0$ and $n \geq 1$, consider the (global) Kummer maps 
\[ \kappa_{m,n}:E(K_m)/p^nE(K_m) \longmono H^1\bigl(K_m,E[p^n]\bigr) \]
and
\[ \kappa_m: E(K_m) \otimes_\Z \Q_p/\Z_p \longmono H^1\bigl(K_m,E[p^\infty]\bigr). \] 
Via these maps, we may identify $\mathbb{M}^\pm_m$ with a divisible subgroup of $\Sel_{p^\infty}^\pm(E/K_m)$ and $\mathbb{M}^\pm_m{[p^n]}$ with a subgroup of $\Sel_{p^\infty}^\pm(E/K_m)[p^n]\simeq\Sel_{p^n}^\pm(E/K_m)$, where this isomorphism, which we shall regard as an identification, follows from the control theorem in \eqref{control-eq}. Let us define \emph{$\pm$-Shafarevich--Tate groups} as 
\begin{alignat*}{2}
&\Sha_{p^n}^\pm(E/K_m)&&\defeq\Sel_{p^n}^\pm(E/K_m)\big/\mathbb{M}_m^\pm[p^n],\\
&\Sha_{p^\infty}^\pm(E/K_m)&&\defeq\varinjlim_{n\geq 1}\Sha^\pm_{p^n}(E/K_m),\\
&\Sha^\pm_{p^\infty}(E/K_\infty)&&\defeq\varinjlim_{m\geq 0}\Sha^\pm_{p^\infty}(E/K_m).
\end{alignat*}
Notice that $\mathbb{M}^+_0=0$ in the inert case, so $\Sha_{p^\infty}^+(E/K)=\Sel_{p^\infty}^+(E/K)$. On the other hand, in both the inert case and the split case, there is an equality $\mathbb{M}^-_0=E(K)\otimes_\Z\Q_p/\Z_p$, so $\Sel_{p^\infty}^-(E/K)=\Sel_{p^\infty}(E/K)$ and $\Sha_{p^\infty}^-(E/K)=\Sha_{p^\infty}(E/K)$, where 
\[ \Sha_{p^\infty}(E/K)\defeq\Sel_{p^\infty}(E/K)\big/\kappa_0\bigl(E(K)\otimes_\Z\Q_p/\Z_p\bigr) \]
is the usual $p$-primary Shafarevich--Tate group of $E$ over $K$.    

\subsection{Universal norms and finite submodules}

Recall that $G_m=\Gal(K_m/K)$, $R_m=\mathbb{F}_p[G_m]$ and $\mathbb{E}_m^\pm=\mathbb{E}_{m,1}^\pm$. We also recall from the introduction the conditions 
\begin{equation} 
\tag{$\Sha(m)$}  \text{$\Sha_{p^\infty}^\pm(E,K_m/K_{m+1})=0$}
\end{equation}
and 
\begin{equation}
\tag{$\mathrm{Heeg}(m)$}  \text{$\mathbb{E}_m^\pm\neq 0$},
\end{equation}
under which we prove the following auxiliary result.

\begin{lemma}
If $\Sha(m)$ and $\mathrm{Heeg}(m)$ hold true for an integer $m\geq 0$, then $U_m^\pm\subset \mathbb{M}_m^\pm[p].$
\end{lemma}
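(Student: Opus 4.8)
The plan is to leverage Lemma~\ref{HLVLEMMA}(3), which under $\mathrm{Heeg}(m)$ produces a free $R_m$-module $U_m^\pm$ of rank $1$ inside $\Sel_p^\pm(E/K_m)$ sitting between $\mathbb{E}_m^\pm$ and $\mathbb{E}_{m+1}^\pm$ (viewed via the control theorem). The goal is to show that this module lands inside $\mathbb{M}_m^\pm[p]$, i.e.\ consists of classes coming from plus/minus Mordell--Weil points rather than merely Selmer classes. First I would recall the inclusions \eqref{inclusions}, namely $\mathcal{E}_{m,n}^\pm\subset\mathbb{E}_{m,n}^\pm\subset\mathbb{M}_m^\pm[p^n]\subset\Sel_{p^n}^\pm(E/K_m)$, so that $\mathbb{E}_m^\pm\subset\mathbb{M}_m^\pm[p]$ is automatic; the point is to control the extra classes in $U_m^\pm$ beyond $\mathbb{E}_m^\pm$.

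The key idea is to compare $U_m^\pm$ with $\mathbb{E}_{m+1}^\pm$ at level $m+1$, where we already know $\mathbb{E}_{m+1}^\pm\subset\mathbb{M}_{m+1}^\pm[p]$ by \eqref{inclusions}. By the construction in \eqref{lemmaHLV2}, $U_m^\pm=(\gamma_{m+1}-1)^{p^{m+1}-p^m-t^\pm}\mathbb{E}_{m+1}^\pm$, so as a subgroup of $\Sel_p^\pm(E/K_{m+1})$ it is contained in $\mathbb{E}_{m+1}^\pm\subset\mathbb{M}_{m+1}^\pm[p]$. Hence every class in $U_m^\pm$ is the Kummer image of a point in $E^\pm(K_{m+1})\otimes\Z/p\Z$ (more precisely in $\mathbb{M}_{m+1}^\pm[p]$). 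Now use that $U_m^\pm$ is $\Gal(K_{m+1}/K_m)$-invariant and identified, via the control isomorphism $\Sel_p^\pm(E/K_{m+1})^{\Gal(K_{m+1}/K_m)}\simeq\Sel_p^\pm(E/K_m)$ of \eqref{control-eq}, with a subgroup of $\Sel_p^\pm(E/K_m)$. So I need: a class in $\Sel_p^\pm(E/K_m)$ whose restriction to $K_{m+1}$ lies in $\mathbb{M}_{m+1}^\pm[p]$ must itself lie in $\mathbb{M}_m^\pm[p]$. This is exactly where $\Sha(m)$ enters: the hypothesis $\Sha_{p^\infty}^\pm(E,K_m/K_{m+1})=0$ says the restriction map on $\pm$-Shafarevich--Tate groups $\Sha_{p^\infty}^\pm(E/K_m)\to\Sha_{p^\infty}^\pm(E/K_{m+1})$ is injective. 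Chasing the defining exact sequences $0\to\mathbb{M}_m^\pm\to\Sel_{p^\infty}^\pm(E/K_m)\to\Sha_{p^\infty}^\pm(E/K_m)\to0$ compatibly in $m$, a class in $\Sel_p^\pm(E/K_m)$ restricting into $\mathbb{M}_{m+1}^\pm[p]$ maps to an element of $\Sha_{p^\infty}^\pm(E/K_m)$ that dies in $\Sha_{p^\infty}^\pm(E/K_{m+1})$, hence is zero, hence the original class lies in the image of $\mathbb{M}_m^\pm[p]$ (using that $\mathbb{M}_m^\pm$ is divisible, so $\mathbb{M}_m^\pm[p]=\mathbb{M}_m^\pm\cap\Sel_{p}^\pm(E/K_m)$ under the identification $\Sel_{p^\infty}^\pm(E/K_m)[p]\simeq\Sel_p^\pm(E/K_m)$).

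Assembling: since $U_m^\pm$, viewed inside $\Sel_p^\pm(E/K_{m+1})$, is contained in $\mathbb{M}_{m+1}^\pm[p]$, and it is the $\Gal(K_{m+1}/K_m)$-fixed part matching a subgroup of $\Sel_p^\pm(E/K_m)$, the diagram chase above forces that subgroup to lie in $\mathbb{M}_m^\pm[p]$; that is, $U_m^\pm\subset\mathbb{M}_m^\pm[p]$, as desired. The main obstacle I anticipate is making the compatibility of the Mordell--Weil/Selmer/Sha exact sequences across the layers $K_m\subset K_{m+1}$ fully precise with respect to the control-theorem identifications --- in particular checking that the restriction map carries $\mathbb{M}_m^\pm[p]$ into $\mathbb{M}_{m+1}^\pm[p]$ (which follows from Lemma~\ref{lem:norm}-type compatibility of the $E^\pm$ and the functoriality of Kummer maps) and that the snake-lemma argument genuinely only needs the vanishing of the kernel $\Sha_{p^\infty}^\pm(E,K_m/K_{m+1})$ and not of a larger cokernel. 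I would be careful to phrase everything at finite level $p$ and invoke the control theorem \eqref{control-eq} and the identification $\mathbb{M}_m^\pm[p^n]\subset\Sel_{p^n}^\pm(E/K_m)$ only where it has been justified in the text.
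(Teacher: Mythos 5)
Your argument is correct and is essentially the paper's own proof: both rest on the observation that $U_m^\pm\subset\mathbb{E}_{m+1}^\pm\subset\mathbb{M}_{m+1}^\pm[p]$ is $\Gal(K_{m+1}/K_m)$-invariant, and then use the control theorem together with the injectivity of $\Sha_{p^\infty}^\pm(E/K_m)\to\Sha_{p^\infty}^\pm(E/K_{m+1})$ supplied by $\Sha(m)$ to descend from $\mathbb{M}_{m+1}^\pm[p]^{\Gal(K_{m+1}/K_m)}$ to $\mathbb{M}_m^\pm[p]$. The paper packages your element-wise diagram chase as a snake-lemma argument yielding the identification $\mathbb{M}_m^\pm[p]\simeq\mathbb{M}_{m+1}^\pm[p]^{\Gal(K_{m+1}/K_m)}$, but the content is the same.
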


\begin{proof} We proceed as in the proof of \cite[Lemma 5.12]{HLV2022}. Set $G\defeq\Gal(K_{m+1}/K_m)$; there is a commutative diagram with exact rows 
\[\xymatrix{0\ar[r] & \mathbb{M}_m^\pm[p]\ar[d]\ar[r] & \Sel_p^\pm(E/K_m)\ar[r]\ar[d]^-\simeq & \Sha_p^\pm(E/K_m)\ar[d]\ar[r]&0\\
0\ar[r] & \mathbb{M}_{m+1}^\pm[p]^G\ar[r] & \Sel_p^\pm(E/K_{m+1})^G\ar[r] & \Sha_p^\pm(E/K_{m+1})^G,
} 
\] 
where the vertical middle isomorphism follows from the control theorem in \eqref{control-eq}. Since the middle vertical map is injective, the left vertical arrow is injective too; on the other hand, the right vertical arrow is injective by $(\Sha(m))$ 
and the middle vertical map is surjective, so the snake lemma shows that the 
left vertical arrow is surjective as well. Thus, we get an isomorphism
\[ \mathbb{M}_m^\pm[p]\overset\simeq\longrightarrow\mathbb{M}_{m+1}^\pm[p]^G, \]
which we shall view as an identification. By definition, $U_m^\pm\subset \mathbb{M}_{m+1}^\pm[p]^G$, and the result follows. \end{proof}

Now we can prove Theorem A in the introduction. 

\begin{theorem} \label{THM1} 
Suppose that \eqref{Norm} holds and that $\Sha(m_0)$ and $\mathrm{Heeg}(m_0)$ are satisfied for an integer $m_0\geq 0$. We also assume \eqref{rank}, i.e., $\mathfrak{X}_p^\pm(E/K_\infty)$ has $\Lambda$-rank $1$. Then $US_p^\pm(E/K)$ is a free $\Z_p$-module of rank $1$ and the quotient $S_p^\pm(E/K)\big/US_p^\pm(E/K)$ is torsion-free. 
\end{theorem}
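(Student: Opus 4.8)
The plan is to reduce the statement to its counterpart over the $m_0$-th layer $K_{m_0}$, to prove that counterpart by transplanting to $K_{m_0}$ the Heegner-point arguments of \cite{BerBor} and \cite[\S5]{HLV2022}, and finally to descend back to $K$ using that restriction is injective. First I would record the structural input already available: by the discussion preceding Definition~\ref{def:uninorm}, $\widehat{S}_p^\pm(E/K_\infty)=\varprojlim_mS_p^\pm(E/K_m)$, the limit taken over corestrictions, is free of rank $1$ over $\Lambda$. Since each $S_p^\pm(E/K_m)$ is a compact $\Z_p$-module, a standard compactness argument identifies the universal norm module $US_p^\pm(E/K_m)$ with the image of the natural projection $\pi_m\colon\widehat{S}_p^\pm(E/K_\infty)\to S_p^\pm(E/K_m)$, and — because $\Gal(K_\infty/K_m)$ acts trivially on the target — $\pi_m$ factors through a $\Lambda$-linear map
\[
\iota_m\colon\ \widehat{S}_p^\pm(E/K_\infty)\big/\omega_m\widehat{S}_p^\pm(E/K_\infty)\ \cong\ \Z_p[G_m]\ \longrightarrow\ S_p^\pm(E/K_m)
\]
with image $US_p^\pm(E/K_m)$. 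I would then show that the whole theorem reduces to the single assertion $(\star)$: $\iota_{m_0}$ is injective and $S_p^\pm(E/K_{m_0})/US_p^\pm(E/K_{m_0})$ is torsion-free.

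Granting $(\star)$, the descent to $K$ is formal, and I would carry it out as follows. From $\pi_0=\mathrm{cores}_{K_{m_0}/K}\circ\pi_{m_0}$ one gets $US_p^\pm(E/K)=\mathrm{cores}_{K_{m_0}/K}\bigl(US_p^\pm(E/K_{m_0})\bigr)$. Let $r\colon S_p^\pm(E/K)\hookrightarrow S_p^\pm(E/K_{m_0})$ be restriction, which is injective by the control theorem \eqref{control-eq} (recall $E(K_m)[p^\infty]=0$), and let $N=\sum_{\sigma\in G_{m_0}}\sigma\in\Z_p[G_{m_0}]$ be the norm element, which is the image of $\omega_{m_0}/\omega_0\in\Lambda$. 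Since $r\circ\mathrm{cores}_{K_{m_0}/K}=N$ and $\iota_{m_0}$ is $\Lambda$-linear, one computes $r\bigl(\iota_0(1)\bigr)=\iota_{m_0}(N)$, where $1$ denotes the image of a $\Lambda$-basis of $\widehat{S}_p^\pm(E/K_\infty)$ in the relevant quotient. As $N$ is non-torsion in the $\Z_p$-free module $\Z_p[G_{m_0}]$ and $\iota_{m_0}$ is injective, $\iota_{m_0}(N)$ spans a free $\Z_p$-module of rank $1$; injectivity of $r$ then forces $US_p^\pm(E/K)=\Z_p\cdot\iota_0(1)\cong\Z_p$, giving the first conclusion. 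For the second, if $ps\in US_p^\pm(E/K)$ with $s\in S_p^\pm(E/K)$, write $ps=a\,\iota_0(1)$ and apply $r$: then $p\,r(s)=\iota_{m_0}(aN)\in US_p^\pm(E/K_{m_0})$, so $r(s)\in US_p^\pm(E/K_{m_0})$ by $(\star)$, say $r(s)=\iota_{m_0}(\xi)$; injectivity of $\iota_{m_0}$ yields $p\xi=aN$ in $\Z_p[G_{m_0}]$, whence $a\in p\Z_p$, and therefore $s\in US_p^\pm(E/K)$ because $S_p^\pm(E/K)$ is $\Z_p$-torsion-free.

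It then remains to prove $(\star)$ over $K_{m_0}$, and this is where $\Sha(m_0)$ and $\mathrm{Heeg}(m_0)$ enter. By a standard elementary-divisors argument for a homomorphism of finite free $\Z_p$-modules, $(\star)$ is equivalent to the injectivity of the reduction $\bar\iota_{m_0}\colon R_{m_0}=\widehat{S}_p^\pm(E/K_\infty)/(p,\omega_{m_0})\to S_p^\pm(E/K_{m_0})/p\hookrightarrow\Sel_p^\pm(E/K_{m_0})$, the last inclusion coming from \eqref{control-eq} (as $\Sel_{p^\infty}^\pm(E/K_{m_0})[p]\simeq\Sel_p^\pm(E/K_{m_0})$). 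The image of $\bar\iota_{m_0}$ — the module of universal norms modulo $p$ — is a cyclic $R_{m_0}$-module. On the other hand, $\mathrm{Heeg}(m_0)$ and Lemma~\ref{HLVLEMMA}(3) produce a free $R_{m_0}$-submodule $U_{m_0}^\pm$ of rank $1$ inside $\Sel_p^\pm(E/K_{m_0})$ with $\mathbb{E}_{m_0}^\pm\subseteq U_{m_0}^\pm\subseteq\mathbb{E}_{m_0+1}^\pm$, and by the preceding Lemma, $\Sha(m_0)$ places $U_{m_0}^\pm$ inside the Mordell--Weil part $\mathbb{M}_{m_0}^\pm[p]$. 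I would then argue, as in \cite[\S5]{HLV2022}, that $U_{m_0}^\pm$ lies in $\mathrm{im}(\bar\iota_{m_0})$, i.e.\ that its elements are universal norms modulo $p$; since a cyclic module over the Artinian local ring $R_{m_0}$ that contains a free rank-$1$ submodule must coincide with that submodule, this gives $\mathrm{im}(\bar\iota_{m_0})=U_{m_0}^\pm$ and hence the injectivity of $\bar\iota_{m_0}$, proving $(\star)$.

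The hard part will be this last step — showing that $U_{m_0}^\pm$ consists of universal norms modulo $p$. The difficulty is that the trace relations of \S\ref{sec:Heegner} (notably (b) and (c)) show that the Heegner classes acquire factors of $p$ under the norm maps, so they do not form an honest norm-compatible tower; one has to work instead with the module $\mathfrak{Z}_\infty^\pm\cong\Lambda$ of Proposition~\ref{propfreeness} and use $\Sha(m_0)$ to locate $U_{m_0}^\pm$ inside $\mathbb{M}_{m_0}^\pm[p]$, where the universal norms can be identified. This substitutes for the ingredient used by \cite{BerBor} in the ordinary case — the surjectivity of the local norm maps at $p$ — which fails for supersingular reduction; it is also the reason the argument must be run at the $m_0$-th layer and then descended rather than carried out over $K$ directly, which is exactly what makes the single hypothesis $\mathrm{Heeg}(m_0)$ (rather than $\mathbb{E}_m^\pm\neq0$ for all $m$) sufficient.
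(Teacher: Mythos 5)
Your overall architecture is sound in its formal parts: the identification of $US_p^\pm(E/K_m)$ with the image of $\pi_m$ on the free module $\widehat{S}_p^\pm(E/K_\infty)\simeq\Lambda$, the elementary-divisor reduction of ``injective with torsion-free cokernel'' to injectivity mod $p$, and the descent from $K_{m_0}$ to $K$ via $r\circ\mathrm{cores}_{K_{m_0}/K}=N$ are all correct. The problem is that the entire arithmetic content of the theorem has been concentrated into the one step you defer as ``the hard part,'' namely the claim that $U_{m_0}^\pm\subseteq\mathrm{im}(\bar\iota_{m_0})$, i.e.\ that every element of $U_{m_0}^\pm$ is the reduction mod $p$ of a universal norm in $S_p^\pm(E/K_{m_0})$. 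This is not proved in your proposal, it is strictly stronger than anything the paper establishes, and there is a concrete obstruction to proving it at the single level $m_0$: as you yourself observe, the trace relations (b) and (c) of \S\ref{sec:Heegner} introduce factors of $p$, so the natural norm-compatible classes built from Heegner points at higher levels land in $p\cdot S_p^\pm(E/K_{m_0})$ and say nothing mod $p$. Placing $U_{m_0}^\pm$ inside $\mathbb{M}_{m_0}^\pm[p]$ via $\Sha(m_0)$ does not help here --- that locates $U_{m_0}^\pm$ in the Mordell--Weil part, not in the universal norms. Indeed the introduction of the paper explicitly concedes that under $\mathrm{Heeg}(m_0)$ alone one \emph{cannot} identify the universal norms with concrete Heegner/Mordell--Weil elements; your proposed identification $\mathrm{im}(\bar\iota_{m_0})=U_{m_0}^\pm$ is precisely the kind of statement the authors avoid.

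The paper's actual proof circumvents this by a limit argument that uses the modules $U_m^\pm$ for \emph{all} $m\geq m_0$ (nonzero by Lemma \ref{HLVLEMMA}(2)), never identifying the universal norm module with any of them. For each $m\geq m_0$ one lifts $U_m^\pm$ to a free rank-one $\Z_p[G_m]$-submodule $\widetilde{U}_m^\pm\subset S_p^\pm(E/K_m)$ with generator $u_m$, and checks that $v_m\defeq\mathrm{cores}_{K_m/K}(u_m)$ is not divisible by $p$ in $S_p^\pm(E/K)$: if it were, then $\tr_{K_m/K}(u_m)=\res_{K_m/K}(v_m)$ would be divisible by $p$ in $S_p^\pm(E/K_m)$, forcing $\sum_{g\in G_m}g\in p\,\Z_p[G_m]$ by freeness of $\widetilde{U}_m^\pm$, a contradiction since $\sum_{g\in G_m}g\neq 0$ in $R_m$. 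Each $v_m$ is a corestriction from level $m$ but not yet a universal norm; compactness of $S_p^\pm(E/K)$ then yields a convergent subsequence whose limit $v_\infty$ lies in $US_p^\pm(E/K)$ and is still prime to $p$, and together with the rank bound $\mathrm{rk}_{\Z_p}US_p^\pm(E/K)\leq 1$ this gives both conclusions directly over $K$, with no descent needed. If you want to salvage your reduction to $(\star)$, the realistic route is to run this same corestrict-and-take-limits argument with target $K_{m_0}$ instead of $K$ --- not to prove that $U_{m_0}^\pm$ itself consists of universal norms mod $p$. As written, your proof is incomplete at its crucial step.
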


\begin{proof} Since $S_p^\pm(E/K_m)=\varprojlim_n \Sel_{p^\infty}^\pm(E/K_m)[p^n]$, the $\Z_p$-module $S_p^\pm(E/K_m)$ is free of finite rank; thus, $S_p^\pm(E/K)$ is a free $\Z_p$-module of finite rank. Since $US_p^\pm(E/K)\subset S_p^\pm(E/K)$ and $S_p^\pm(E/K)\simeq\Z_p^r$ for some integer $r\geq1$, the theorem follows if we show that $US_p^\pm(E/K)$ is isomorphic to $\Z_p$ and contains an element of $S_p^\pm(E/K)$ which is not divisible by $p$. The Pontryagin dual $\mathfrak{X}_p^\pm(E/K_\infty)$ of $\Sel_{p^\infty}^\pm(E/K_\infty)$ has $\Lambda$-rank equal to $1$, and therefore the rank of the $\Lambda$-module $\widehat{S}_p^\pm(E/K_\infty)$ from \eqref{hat-S-eq} is $1$. Write $\widehat{S}_p^\pm{(E/K_\infty)}_{G_\infty}$ for the $\Z_p$-module of $G_\infty$-coinvariants of $\widehat{S}_p^\pm(E/K_\infty)$; there is a canonical surjection 
\[ \widehat{S}_p^\pm{(E/K_\infty)}_{G_\infty}\longepi US_p^\pm(E/K), \] 
so the $\Z_p$-rank of $US_p^\pm(E/K)$ is at most $1$. 

Fix $m\geq m_0$. Then $\mathbb{E}_m^\pm\neq 0$. Consider the Tate module $\Ta_p(\mathbb{M}_m^\pm)\defeq\varprojlim_n \mathbb{M}_m^\pm[p^n]$ of $\mathbb{M}_m^\pm$ and recall that $\mathbb{M}_m^\pm[p]$ contains $U_m^\pm$, which is a free $R_m$-module. In light of the canonical isomorphism $\Ta_p(\mathbb{M}_m^\pm)\big/p\Ta_p(\mathbb{M}_m^\pm)\simeq\mathbb{M}_m^\pm[p]$, we can take a lift 
\[ \widetilde{U}_m^\pm\subset \Ta_p(\mathbb{M}_m^\pm) \] 
of $U_m^\pm$ modulo $p$ that is a free $\Z_p[G_m]$-module of rank $1$ and let $u_m$ be a generator of it. This module injects into a rank $1$ free $\Z_p[G_m]$-submodule of $S_p^\pm(E/K_m)$; we still denote this submodule by $\widetilde{U}_m^\pm$ and write $u_m\in\widetilde U_m^\pm$ for a generator. Then define 
\[ v_m\defeq\mathrm{cores}_{K_m/K}(u_m). \] 
Observe that $\tr_{K_m/K}(u_m)$ is not divisible by $p$ in $S_p^\pm(E/K_m)$, as $\widetilde{U}_m^\pm$ is a free $\Z_p[G_m]$-module. To justify this non-divisibility, assume by contradiction that $\tr_{K_m/K}(u_m)=pw$ for some $w\in S_p^\pm(E/K_m)$; then $w=\lambda u_m$ for some $\lambda\in \Z_p[G_m]$ and so
\[ \Biggl(\,\sum_{g\in G_m}g-p\lambda\Biggr)u_m=0. \] 
Since $u_m$ is a generator of the free $\Z_p[G_m]$-module $\widetilde{U}^\pm_m$, it follows that $\sum_{g\in G_m}g=p\lambda$, which is impossible because $\sum_{g\in G_m}g\neq 0$ in $R_m$, while the image of $p\lambda$ in $R_m$ is (of course) trivial. Now we check that $v_m$ is not divisible by $p$ in $S_p^\pm(E/K)$. By contradiction, suppose again that $v_m$ is divisible by $p$ in $S_p^\pm(E/K)$ and choose $w_m\in S_p^\pm(E/K)$ such that $v_m=pw_m$. As in the proof of Proposition \ref{coreslemma}, there is a commutative triangle
\[ \xymatrix@R=40pt{S_p^\pm(E/K_{m})\ar[drr]^-{\tr_{K_m/K}} \ar[rr]^-{\mathrm{cores}_{K_m/K}} && S_p^\pm(E/K)\ar[d]^-{\mathrm{res}_{K_m/K}}\\
&& S_p^\pm(E/K_{m})^{\Gal(K_m/K)}} \]
(see, \emph{e.g.}, \cite[Proposition 5.9]{Brown}). Thus, there are equalities 
\[ \tr_{K_m/K}(u_m)=\res_{K_m/K}(v_m)=p\cdot\mathrm{res}_{K_m/K}(w_m). \]
It follows that $\tr_{K_m/K}(u_m)$ is $p$-divisible in $S_p^\pm(E/K_m)$, which is a contradiction. Therefore, $v_m$ is an element of $S_p^\pm(E/K)$ that
\begin{itemize}
\item is not divisible by $p$;
\item is a corestriction from $K_m$.
\end{itemize}
Since $S_p^\pm(E/K)$ is compact, the sequence ${(v_m)}_{m\geq m_0}$ admits a subsequence ${(v_{n_i})}_i$ converging to an element $v_\infty\in S_p^\pm(E/K)$. Then there is a subsequence ${(v_{m_i})}_i$ of ${(v_m)}_{m\geq m_0}$ satisfying the following conditions: 
\begin{itemize}
    \item $v_\infty=v_{m_i}+p^i \epsilon_i$ for some $\epsilon_i\in  S_p^\pm(E/K)$; 
    \item $v_{m_i}=\tr_{K_i/K}(w_i)$ for some $w_i\in S_p^\pm(E/K_i)$. 
\end{itemize}
Hence, $v_\infty=\tr_{K_i/K}(w_{i}+\epsilon_i)$ for all $i\geq 1$, so 
$v_\infty\in US_p^\pm(E/K)$. Moreover, since the $v_{m_i}$ are not divisible by $p$ 
in $S_p^\pm(E/K)$, the same is true of $v_\infty$. In particular, $v_\infty$ is non-zero, 
so the $\Z_p$-rank of $US_p^\pm(E/K)$ is at least $1$; on the other hand, this rank is at most $1$, so it is equal to $1$. Now $US_p^\pm(E/K)$ has $\Z_p$-rank $1$ and contains an element of $S_p^\pm(E/K)$ that is not divisible by $p$. As we observed before, this implies the theorem. \end{proof}

As an application of our main theorem, we can prove our result on the non-existence of finite non-trivial $\Lambda$-submodules of Pontryagin duals.

\begin{corollary} 
Under the assumptions of Theorem \ref{THM1}, $\mathfrak{X}_{p}^\pm(E/K_\infty)$ does not have any finite non-trivial $\Lambda$-submodules. 
\end{corollary}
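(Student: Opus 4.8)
The plan is to deduce this immediately by combining the two main results already established in the excerpt. Theorem \ref{THM1} asserts, under exactly the hypotheses invoked in the corollary, that the quotient $\Z_p$-module $S_p^\pm(E/K)\big/US_p^\pm(E/K)$ is torsion-free (and indeed that $US_p^\pm(E/K)$ is free of rank $1$ over $\Z_p$, though only the torsion-freeness of the quotient is needed here). Theorem \ref{THM} provides the bridge: it states that $\mathfrak{X}_p^\pm(E/K_\infty)$ has no non-trivial finite $\Lambda$-submodules \emph{if and only if} $S_p^\pm(E/K)\big/US_p^\pm(E/K)$ is torsion-free. So the only step is to feed the conclusion of Theorem \ref{THM1} into the ``if'' direction of Theorem \ref{THM}.

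Concretely, I would argue as follows. First observe that the hypotheses of Theorem \ref{THM1} — namely \eqref{Norm}, \eqref{rank}, and the validity of $\Sha(m_0)$ and $\mathrm{Heeg}(m_0)$ for some $m_0 \geq 0$ — are precisely the standing assumptions of the corollary. In particular \eqref{Norm} is in force, which is the hypothesis required for Theorem \ref{THM} to apply (this is the assumption recalled just before the statement of Theorem \ref{THM}). Then apply Theorem \ref{THM1} to conclude that $S_p^\pm(E/K)\big/US_p^\pm(E/K)$ is torsion-free as a $\Z_p$-module. Finally, invoke the equivalence in Theorem \ref{THM} to conclude that $\mathfrak{X}_p^\pm(E/K_\infty)$ has no non-trivial finite $\Lambda$-submodules.

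There is essentially no obstacle here: all the analytic and cohomological work — the construction of the universal norm element $v_\infty$ not divisible by $p$, the freeness over $\Z_p$, the control-theoretic inputs, and the homological characterization of the absence of finite submodules — has already been carried out in the proofs of Theorems \ref{THM} and \ref{THM1}. The corollary is a formal restatement packaging those two theorems together, so the ``proof'' amounts to little more than citing them in the correct order and checking that the hypotheses match. If anything merits a word of care, it is simply noting explicitly that \eqref{Norm} (rather than only \eqref{Tam}) is what is being used, so that Theorem \ref{THM} is legitimately applicable; but this is already built into the hypotheses of Theorem \ref{THM1}.
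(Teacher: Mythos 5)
Your proposal is correct and matches the paper's own proof, which is exactly the one-line combination of Theorems \ref{THM} and \ref{THM1}: Theorem \ref{THM1} gives torsion-freeness of $S_p^\pm(E/K)\big/US_p^\pm(E/K)$, and the ``if'' direction of Theorem \ref{THM} then yields the conclusion. Your additional check that \eqref{Norm} is in force so that Theorem \ref{THM} applies is a sensible (if implicit in the paper) verification.
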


\begin{proof} A combination of Theorems \ref{THM} and \ref{THM1}. \end{proof}

\begin{remark}
In the inert plus case, the assumption $\mathbb{E}_m^+\neq 0$ for some $m\geq 0$ is equivalent to the condition $\mathbb{E}_m^+\neq 0$ for some $m\geq 1$, as $\mathbb{E}_0^+= 0$. This explains why the arguments in \cite{HLV2022} cannot be adapted to this case. 
\end{remark}

\bibliographystyle{amsplain}
\bibliography{main}

\providecommand{\bysame}{\leavevmode\hbox to3em{\hrulefill}\thinspace}
\providecommand{\MR}{\relax\ifhmode\unskip\space\fi MR }
\providecommand{\MRhref}[2]{%
  \href{http://www.ams.org/mathscinet-getitem?mr=#1}{#2}
}
\providecommand{\href}[2]{#2}
\begin{thebibliography}{10}

\bibitem{Ber-Annihilator}
M.~Bertolini, \emph{An annihilator for the {$p$}-{S}elmer group by means of
  {H}eegner points}, Atti Accad. Naz. Lincei Cl. Sci. Fis. Mat. Natur. Rend.
  Lincei (9) Mat. Appl. \textbf{5} (1994), no.~2, 129--140.

\bibitem{Bertolini95}
\bysame, \emph{Selmer groups and {H}eegner points in anticyclotomic {$\bold
  Z_p$}-extensions}, Compositio Math. \textbf{99} (1995), no.~2, 153--182.

\bibitem{BerBor}
\bysame, \emph{Iwasawa theory for elliptic curves over imaginary quadratic
  fields}, J. Th\'{e}or. Nombres Bordeaux \textbf{13} (2001), no.~1, 1--25.

\bibitem{BD96}
M.~Bertolini and H.~Darmon, \emph{Heegner points on {M}umford--{T}ate curves},
  Invent. Math. \textbf{126} (1996), no.~3, 413--456.

\bibitem{BerLongVen}
M.~Bertolini, M.~Longo, and R.~Venerucci, \emph{The anticyclotomic main
  conjectures for elliptic curves}, arXiv:2306.17784.

\bibitem{Brown}
K.~S. Brown, \emph{Cohomology of groups}, Graduate Texts in Mathematics,
  vol.~87, Springer-Verlag, New York, 1994.

\bibitem{BBLII}
A.~Burungale, K.~B\"uy\"ukboduk, and A.~Lei, \emph{{A}nticyclotomic {I}wasawa
  theory of abelian varieties of $\mathrm{GL}_2$-type at non-ordinary primes
  {II}}, arXiv:2310.06813.

\bibitem{BBL2022}
\bysame, \emph{Anticyclotomic {I}wasawa theory of abelian varieties of {$\rm
  GL_2$}-type at non-ordinary primes}, Adv. Math. \textbf{439} (2024), Paper
  No. 109465, 63.

\bibitem{BKO2021}
A.~Burungale, S.~Kobayashi, and K.~Ota, \emph{Rubin's conjecture on local units
  in the anticyclotomic tower at inert primes}, Ann. of Math. (2) \textbf{194}
  (2021), no.~3, 943--966.

\bibitem{Cor02}
C.~Cornut, \emph{Mazur's conjecture on higher {H}eegner points}, Invent. Math.
  \textbf{148} (2002), no.~3, 495--523.

\bibitem{cox}
D.~A. Cox, \emph{Primes of the form {$x^2 + ny^2$}}, A Wiley-Interscience
  Publication, John Wiley \& Sons, Inc., New York, 1989.

\bibitem{Gross}
B.~H. Gross, \emph{Kolyvagin's work on modular elliptic curves},
  {$L$}-functions and arithmetic ({D}urham, 1989), London Math. Soc. Lecture
  Note Ser., vol. 153, Cambridge Univ. Press, Cambridge, 1991, pp.~235--256.

\bibitem{HLV2022}
J.~Hatley, A.~Lei, and S.~Vigni, \emph{{$\Lambda$}-submodules of finite index
  of anticyclotomic plus and minus {S}elmer groups of elliptic curves},
  Manuscripta Math. \textbf{167} (2022), no.~3-4, 589--612.

\bibitem{Ho04}
B.~Howard, \emph{Iwasawa theory of {H}eegner points on abelian varieties of
  {$\rm GL_2$} type}, Duke Math. J. \textbf{124} (2004), no.~1, 1--45.

\bibitem{LV2019}
M.~Longo and S.~Vigni, \emph{Plus/minus {H}eegner points and {I}wasawa theory
  of elliptic curves at supersingular primes}, Boll. Unione Mat. Ital.
  \textbf{12} (2019), no.~3, 315--347.

\bibitem{Matar}
A.~Matar, \emph{Kolyvagin's work and anticyclotomic tower fields: the
  supersingular case}, Acta Arith. \textbf{201} (2021), no.~2, 131--147.

\bibitem{Nek01}
J.~Nekov\'{a}\v{r}, \emph{On the parity of ranks of {S}elmer groups. {II}}, C.
  R. Acad. Sci. Paris S\'{e}r. I Math. \textbf{332} (2001), no.~2, 99--104.

\bibitem{Nek07}
\bysame, \emph{The {E}uler system method for {CM} points on {S}himura curves},
  {$L$}-functions and {G}alois representations, London Math. Soc. Lecture Note
  Ser., vol. 320, Cambridge Univ. Press, Cambridge, 2007, pp.~471--547.

\bibitem{rubin87}
K.~Rubin, \emph{Local units, elliptic units, {H}eegner points and elliptic
  curves}, Invent. Math. \textbf{88} (1987), no.~2, 405--422.

\bibitem{Shii}
R.~Shii, \emph{On non-trivial {$\Lambda$}-submodules with finite index of the
  plus/minus {S}elmer group over anticyclotomic {$\mathbb{Z}_p$}-extension at
  inert primes}, arXiv:2308.16384, to appear in Annales mathématiques du
  Québec.

\end{thebibliography}
\end{document}